\documentclass[11pt]{amsart}
\usepackage{fancyhdr}
\usepackage[english]{babel}
\usepackage{amssymb}
\usepackage{amsmath}
\usepackage{amsthm}
\usepackage{bbold}
\usepackage{mathrsfs}
\usepackage{enumerate}
\usepackage{xcolor}
\usepackage{ifpdf}
\usepackage[all]{xy}
\usepackage{tikz-cd}
%\usetikzlibrary{decorations.pathmorphing,arrows}
\usepackage[initials]{amsrefs}
\usepackage{color}

\numberwithin{equation}{section}

% Symbols:

\newcommand{\bbN}{{\mathbf N}}
\newcommand{\bbR}{{\mathbf R}}

\newcommand{\bbC}{{\mathbf C}}
\newcommand{\bbE}{\mathbb{E}}

\newcommand{\calA}{\mathcal{A}}

\newcommand{\calH}{\mathcal{H}}

\newcommand{\calX}{\mathcal{X}}
\newcommand{\calY}{\mathcal{Y}}

% General math commands:

\newcommand{\iv}{^{-1}}

\newcommand{\supp}{\operatorname{supp}}

\newcommand{\Prob}{\operatorname{Prob}}

\newcommand{\ra}{\rightarrow}

\newcommand{\dd }{\,{\rm d}}

% Theorem styles and numbering:
\newtheorem{mthm}{Theorem}

\newtheorem{theorem}{Theorem}[section]
\newtheorem{lemma}[theorem]{Lemma}

\newtheorem{claim}{Claim}

\newtheorem{corollary}[theorem]{Corollary}

\newtheorem{proposition}[theorem]{Proposition}
\newtheorem{prop}[theorem]{Proposition}

\newtheorem{assumption}[theorem]{Assumption}

\begin{document}

\title[Lattice of Boundaries and the Entropy Spectrum]{On the Lattice of Boundaries and the Entropy Spectrum of Hyperbolic Groups}
\author{Samuel Dodds}
\address{University of Illinois at Chicago}
\email{sdodds3@uic.edu}

\date{\today}

\maketitle

\begin{abstract}
Let $\Gamma$ be a non-elementary hyperbolic group and $\mu$ be a probability on $\Gamma$. We study the $\mu$-proximal, stationary actions, also known as boundary actions, of $\Gamma$. In particular, we are interested in the spectrum of Furstenberg entropies of $(\Gamma,\mu)$-boundaries, and the lattice-theoretic and topological structure of the set $\mathcal{BL}(\Gamma,\mu)$ of boundaries. We prove that all hyperbolic groups have infinitely many distinct boundaries, which attain an infinite set of distinct entropies. Additionally, for simple random walks on non-abelian free groups $F_d$, we establish that there are infinitely many boundaries whose entropy is greater than $\frac{1}{2}-\epsilon$ times the entropy of Poisson boundary, when the rank $d$ is large. General results of independent interest about the order-theoretic and continuity properties of Furstenberg entropy for countable groups are attained along the way. This includes the result that under mild assumptions, the spectrum of boundary entropies $\calH_{\text{bound}}(\Gamma,\mu)$ is closed.
\end{abstract}

%%%%%%%%%%%%%%%%%%%%%%%%%%%%%%%%%%%%%%%%%%%%%%%%%%%%%%%%%%%%%%%%%%%%%%%%%%%%%%%%%%%%%%%%%%%%%%%%%%%

\section{Introduction}

Let $\Gamma$ be a group, and $\mu \in \Prob(\Gamma)$ be a probability measure on $\Gamma$. The notion of a {\it $(\Gamma,\mu)$-boundary} was put forward by Furstenberg (\cite{Furst1}, \cite{Furst2}) to generalize harmonic analysis and the study of random walks to non-abelian groups. Briefly, a measure space $(B,\calA,\nu)$ is a $(\Gamma,\mu)$-boundary if it is {\it $(\Gamma,\mu)$-stationary} ($\mu * \nu = \int_\Gamma g\nu \dd\mu(g)$) and {\it $(\Gamma,\mu)$-proximal}, which means that after choosing a sequence $g_i\in \Gamma$ independently according to $\mu$, then $g_1 \ldots g_n\nu$ almost surely concentrates into a Dirac mass.  

There are typically many stationary boundaries associated to a single measured group $(\Gamma, \mu)$. There is however, a common structure: for a fixed $(\Gamma,\mu)$, the collection of all stationary boundaries $\mathcal{BL}(\Gamma,\mu)$ is an (order-theoretic) lattice. The order is given by $\Gamma$-equivariant factor maps, if $(B_1,\nu_1) \ra (B_2,\nu_2)$ then we say $(B_1,\nu_1) \geq (B_2,\nu_2)$. For any pair of boundaries the join and meet exist and are denoted by $(B_1\vee B_2, \nu_1 \vee \nu_2)$ and $(B_1\wedge B_2, \nu_1 \wedge \nu_2)$, respectively. The maximal element of $\mathcal{BL}(\Gamma,\mu)$ is the {\it Poisson Boundary} $(\partial_P(\Gamma,\mu),\nu_P)$ and the minimal element is always the trivial boundary $(\ast,\delta_\ast)$. Thus we may think of all boundaries as arising as factors of the Poisson boundary.

Entropy is a key tool in the study of stationary spaces. The particular variety of entropy which is most adapted to this situation is that of {\it Furstenberg Entropy}: if $(X,\lambda)$ is a stationary $(\Gamma,\mu)$-probability space, then set

\begin{equation} \label{entropy}
h_\mu(X,\lambda) = \int_\Gamma \int_X -\log \left ( \frac{\dd g\iv \lambda }{\dd \lambda}(x) \right) \dd \lambda(x) 
\end{equation}

Furstenberg entropy is decreasing under factors, and strictly decreasing under non-trivial factors between boundaries. The entropy of the trivial boundary is 0, and the entropy of the Poisson boundary $\partial_P(\Gamma,\mu)$ is finite under the assumption that $H(\mu) = \sum_{g \in \Gamma} -\log(\mu(g)) \mu(g) < \infty$. We will take this as granted for the remainder of the paper.

\begin{assumption}
For each measured group $(\Gamma,\nu)$, the Shannon entropy is finite: $H(\mu) \leq \infty$
\end{assumption}

Kaimanovich-Vershik (\cite{KV}) show that one can compute the entropy of the Poisson boundary via $h_\mu(\partial_P(\Gamma,\mu, \nu_P) = h_{RW}(\mu)$ where 

\[
h_{RW}(\mu) = \lim_{n\ra\infty} \frac{H(\mu^n)}{n}
\]

\noindent is Avez's \cite{Avez} random walk entropy. Entropy can be viewed an order preserving map from the lattice of boundaries $\mathcal{BL}(\Gamma,\mu)$ to $\bbR$. We will refer to the image of this map as the {\it boundary entropy spectrum}, and denote it by 
\[
\calH_\text{bound}(\Gamma,\mu)=\{h_\mu(B,\nu) ~|~ (B,\nu) \text{ is a } (\Gamma,\mu)\text{-boundary}\}
\] 
Similarly, if we extend the domain of the Furstenberg entropy functional to all $(\Gamma,\mu)$-stationary spaces, we call the image the {\it stationary entropy spectrum}, denoted by 
\[
\calH_\text{stat}(\Gamma,\mu)=\{h_\mu(X,\lambda) | (X,\lambda) \text{ is a stationary} (\Gamma,\mu)\text{-space}\}
\]  
Understanding the behaviour of the $\calH_\text{bound}(\Gamma,\mu)$ is quite difficult in general, but one interesting piece of information is Kazhdan's Property (T). Let $\mu$ be finitely supported, symmetric and generating, then according to Nevo (\cite{Nevo}) and Bowen-Hartman-Tamuz \cite{BHT} $\Gamma$ has property (T) if and only if 0 is isolated in $\calH_\text{stat}(\Gamma,\mu)$. For $\calH_\text{bound}(\Gamma,\mu)$ only the ``only if" statement is known. For fixed $(\Gamma,mu)$ a bit more is understood. If $\Gamma$ is a lattice in a semisimple Lie group, or the fundamental group of a compact negatively curved Riemannian manifold, then probabilities $\mu$ can be constructed that ``discretize Brownian motion" in some sense (see \cite{Furst1}), so that the hitting measures at the boundaries of the associated symmetric spaces or universal covers, respectively, are precisely the Lebesgue measure. In the higher rank case, one can then apply Margulis' factor theorem \cite{Margulis_factor} to see that, $\mathcal{BL}(\Gamma,\mu)$ is a finite lattice, and thus $\calH_\text{bound}(\Gamma,\mu)$ is finite as well.

On the other hand, Bowen \cite{Bowen} has shown that for $\Gamma =F_d$ a free group and $\mu$ uniform on a symmetric free generating set, then $\calH_\text{stat}(\Gamma,\mu)$ is an interval, in fact, one can take the spaces which attain this range of entropy to be ergodic. Hartman-Tamuz \cite{HT} show that $0$ is not isolated in $\mathcal{BL}(\Gamma,\mu)$ when $\Gamma$ is virtually free, and $\mu$ is any generating measure with finite first moment. Tamuz-Zheng show that $\calH_\text{bound}(\Gamma,\mu)$ contains an infinite set with no isolated points \cite{TZh}. See also \cite{HY}. Here, we will show that there are boundaries whose entropy is at least a fixed proportion of the maximal entropy.

\begin{mthm}\label{middle H thm}
Let $\Gamma = F_d$ be a non-abelian free group, and $\mu$ be a uniform probability on a symmetric free generating set. Then there is a family of boundaries $(B_n,\nu_n)$, so that for any $\epsilon>0$ there is an $n$ such that
\[
h_{RW} > h_\mu(B_n,\nu_n) > \frac{d-2}{2d-2}h_{RW}(\mu)-\epsilon
\]
\end{mthm}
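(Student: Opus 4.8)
The plan is to realize every $(\Gamma,\mu)$-boundary as a factor of the Poisson boundary $(\partial_P(\Gamma,\mu),\nu_P)$, which for the simple random walk is the space $\partial T$ of infinite reduced words (the ends of the $2d$-regular Cayley tree $T$) equipped with the harmonic measure $\nu$. First I would record the explicit structure of $\nu$: it is the Markov measure assigning mass $\frac{1}{2d}$ to each first letter and $\frac{1}{2d-1}$ to each admissible continuation, so $\nu$ of a length-$n$ cylinder equals $\frac{1}{2d}(2d-1)^{-(n-1)}$. A direct computation of the Radon--Nikodym cocycle then gives, for each generator $s$, the two values $\frac{ds^{-1}\nu}{d\nu}=2d-1$ on $\{\xi_1=s^{-1}\}$ and $\frac{ds^{-1}\nu}{d\nu}=\frac{1}{2d-1}$ elsewhere; feeding this into \eqref{entropy} yields $h_{RW}(\mu)=\frac{d-1}{d}\log(2d-1)$. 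This identity is the numerical backbone of the argument, since the target constant factors as $\frac{d-2}{2d-2}h_{RW}(\mu)=\frac{d-2}{2d}\log(2d-1)$.

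The guiding heuristic, which I would use to locate the correct constant before constructing anything, is that $\frac{d-2}{2d-2}h_{RW}$ is exactly the Furstenberg entropy one obtains on the full boundary after replacing $\mu$ by the lazy measure $q\delta_e+(1-q)\mu$ with holding probability $q=\frac{d}{2d-2}$: since the $\delta_e$-term contributes nothing to \eqref{entropy}, the reweighted entropy is $(1-q)h_{RW}=\frac{d-2}{2d-2}h_{RW}$. The family $(B_n,\nu_n)$ should therefore be built to \emph{emulate} a walk that discards a $q$-fraction of its steps while keeping the ambient growth rate $\log(2d-1)$ of the full tree. Because laziness does not change the underlying boundary, this cannot be achieved by passing to a Poisson boundary of a quotient group: reducing the speed through a quotient forces short cycles, hence a loss of growth below $\log(2d-1)$. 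So I would instead construct the $B_n$ as genuine $\Gamma$-equivariant intermediate factors of $(\partial T,\nu)$, cut out by an increasing sequence of $\Gamma$-invariant sub-$\sigma$-algebras that retain the directional data of the trajectory along a stationary, proximality-preserving ``thinning'' of the trajectory. Distinctness of the $B_n$ (hence that there are infinitely many) I would certify by an invariant of the factor, for instance the index of the associated subrelation, arranged to be strictly monotone in $n$.

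With the family in hand, the entropy estimate splits into two one-sided bounds. The upper bound $h_\mu(B_n,\nu_n)<h_{RW}$ is immediate from the strict monotonicity of Furstenberg entropy under non-trivial factors of boundaries, recorded in the introduction, once $B_n$ is shown to be a proper factor. For the lower bound I would compute the Radon--Nikodym cocycle of $\nu_n$ directly from that of $\nu$ by conditioning on the sub-$\sigma$-algebra: Jensen's inequality gives $h_\mu(B_n)\le h_{RW}$, and the work is to control the defect. Using the explicit two-valued cocycle on $\partial T$ together with the Markov structure of $\nu$, the conditioned cocycle is again essentially two-valued, and its contribution to \eqref{entropy} can be bounded below by $(1-q_n)$ times the full contribution, where $q_n$ is the fraction of discarded information; choosing the thinning so that $q_n\downarrow\frac{d}{2d-2}$ then produces $h_\mu(B_n,\nu_n)>\frac{d-2}{2d-2}h_{RW}-\epsilon$ for $n$ large.

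The main obstacle is the tension between the two requirements, which pull in opposite directions: to keep the entropy a definite positive fraction of $h_{RW}$ the factor must retain a positive density of the directional information, but to remain a \emph{boundary} rather than merely a stationary space it must stay proximal, and proximality is precisely what forbids discarding too much. I expect the threshold density at which proximality can still be maintained to be what forces the constant $\frac{d}{2d-2}$ (equivalently a harmonic-measure dimension $\frac{d-2}{2d-2}\to\tfrac12$), so the crux of the proof is a sharp proximality criterion for the thinned factor: showing that as long as the retained density exceeds this threshold, the martingale $g_1\cdots g_n\nu_n$ still collapses to a point mass almost surely. Verifying this criterion, while simultaneously keeping the family infinite and the cocycle computation honest, is where essentially all of the difficulty lies; the hypothesis that $d$ be large enters precisely because the threshold $\frac{d}{2d-2}$ approaches $\tfrac12$ only in that regime.
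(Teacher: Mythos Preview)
Your computation of $h_{RW}(\mu)=\frac{d-1}{d}\log(2d-1)$ and the factorisation $\frac{d-2}{2d-2}h_{RW}(\mu)=\frac{d-2}{2d}\log(2d-1)$ are correct and are indeed the numerical backbone of the argument. But the proposal goes wrong precisely at the point where you dismiss quotients: the paper's proof \emph{does} take $B_n$ to be the Poisson boundary of a quotient $\Gamma_n=F_d/N_n$. Your objection that ``reducing the speed through a quotient forces short cycles, hence a loss of growth below $\log(2d-1)$'' is a red herring. What matters is not the growth of the quotient but the \emph{cogrowth} of the kernel: one has the general inequality
\[
h_{RW}(\mu)-h_{\mu'}(\partial_P(\Gamma/N,\mu'),\nu_P')\ \le\ \delta(N)\ :=\ \limsup_{n\to\infty}\tfrac{1}{n}\log|N\cap B(n)|,
\]
proved by a short Jensen argument on the coset decomposition of $H(\mu^n)$. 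By a result of Jaerisch--Matsuzaki there exist normal subgroups $N_i\triangleleft F_d$ with $\delta(N_i)\to\frac{1}{2}v(F_d)=\frac{1}{2}\log(2d-1)$; plugging this in gives $h_\mu(\partial_P\Gamma_i)\ge \frac{d-1}{d}\log(2d-1)-\frac{1}{2}\log(2d-1)-\epsilon=\frac{d-2}{2d-2}h_{RW}(\mu)-\epsilon$. The constant $\frac{d-2}{2d-2}$ thus arises from the growth-tightness threshold $\delta(N)\ge\frac{1}{2}v(F_d)$, not from any lazy-walk heuristic.

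Your alternative construction, by contrast, is not a proof but a wish list. A ``thinning of the trajectory'' lives on the path space $\Gamma^\bbN$, not on $\partial T$, and there is no mechanism offered for producing a $\Gamma$-\emph{invariant} sub-$\sigma$-algebra of $\partial T$ out of it; time-thinning is equivariant for the shift, not for the left $\Gamma$-action. The ``sharp proximality criterion'' you identify as the crux is never formulated, and the assertion that the conditioned cocycle is ``again essentially two-valued'' has no justification once one conditions on a nontrivial invariant sub-$\sigma$-algebra. In short, the proposal rejects the approach that works on mistaken grounds and replaces it with one whose central object is never actually constructed.
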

 
Clearly, this estimate becomes stronger as $d \ra \infty$. In forthcoming work with Alex Furman, we establish that there is an interval of the form $[0,\epsilon]$ in $\calH_\text{bound}(F_d,\mu)$. With this one can strengthen the above result to state that there is a interval of entropies that are attained near the middle of the possible range of entropies as $d \ra \infty$.

In the more general setting of hyperbolic groups, we show that an countably branching, countably deep rooted tree embedds into $\mathcal{BL}(\Gamma,\mu)$. That is to say, consider the set of finite sequences of natural numbers $\bbN^{<\infty} = \cup_{n=0}^\infty \bbN^n$ with a partial order so that a sequence $\alpha = (a_1,\ldots, a_n)$ is greater than a sequence $\beta=(b_1,\ldots,b_m)$ iff $n<m$ and for $i\leq n$, $a_i=b_i$. For $\alpha,\beta \in \bbN^{<\infty}$, let $\alpha \vee \beta$ be the longest sequence so that the beginning of $\alpha$ and $\beta$ agree with $\alpha \vee \beta$. One can think of this as a countably branching, countable depth rooted tree, and $\alpha \vee \beta$ is the location that the branch containing $\alpha$ and $\beta$ first join. Now we can state our theorem:

\begin{mthm}\label{hyperbolic boundaries}
Given a non-elementary hyperbolic group $\Gamma$ and a probability $\mu$ on $\Gamma$ with finite logarithmic moment $\sum_{g \in \Gamma} \log |g| \mu(g) < \infty$, then there is a family of distinct, non-trivial boundaries $\{(B_\alpha,\nu_\alpha) | \alpha \in \bbN^{<\infty} \}$ so that $B_\alpha \vee B_\beta  = B_{\alpha \wedge \beta}$. Moreover, there is constant $c>0$ so that $h_\mu(B_\alpha, \nu_\alpha) > c$. 
\end{mthm}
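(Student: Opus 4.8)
The plan is to translate the statement entirely into the language of invariant sub-$\sigma$-algebras of the Poisson boundary. Since the excerpt records that $(\partial_P(\Gamma,\mu),\nu_P)$ is the maximal element of the boundary lattice and every boundary is a factor of it, and since a factor of a $\mu$-proximal (resp. stationary) action is again $\mu$-proximal (resp. stationary) — push the measures $g_1\cdots g_n \nu_P$ forward through the factor map and use that $\pi_*\delta = \delta_{\pi}$ — the boundaries of $(\Gamma,\mu)$ are in bijection with the $\Gamma$-invariant sub-$\sigma$-algebras $\calA \subseteq \calB(\partial_P(\Gamma,\mu))$, taken modulo $\nu_P$-null sets, with join $\calA_1 \vee \calA_2$ the generated $\sigma$-algebra and meet $\calA_1 \cap \calA_2$. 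So the first step is to reduce Theorem~\ref{hyperbolic boundaries} to producing a family $\{\calA_\alpha\}_{\alpha \in \bbN^{<\infty}}$ of such sub-$\sigma$-algebras for which $\alpha \mapsto \calA_\alpha$ is an order-embedding (shorter sequences mapping to larger $\sigma$-algebras), which satisfies $\calA_\alpha \vee \calA_\beta = \calA_{\alpha\vee\beta}$, and which admits a uniform positive entropy floor.

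The second step is to isolate a single branching move, since the tree is grown by iterating it. For siblings $(\gamma,i)$ and $(\gamma,j)$ with $i \neq j$ and common parent $\gamma$, the tree join is $\gamma$, so the required relation reads
\[
\calA_{(\gamma,i)} \vee \calA_{(\gamma,j)} = \calA_\gamma, \qquad \calA_{(\gamma,i)} \subsetneq \calA_\gamma ;
\]
that is, each child is a strict factor of the parent, yet \emph{any two} distinct children already regenerate it. The model I would use is ``a plane and its lines'': realize the parent boundary $\calA_\gamma$ as a relatively independent self-joining $Q \times_{\calA_0} Q$ of a boundary $Q$ over a fixed nontrivial base boundary $\calA_0$, and take the children to be the graphs of a countable family of distinct $\Gamma$-equivariant, $\calA_0$-relative automorphisms $\phi_i$ of $Q$. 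Each graph is $\Gamma$-equivariantly isomorphic to $Q$, hence is a boundary and a strict factor of $\calA_\gamma$, and two distinct graphs generate the whole joining exactly when $\phi_i\,\phi_j^{-1}$ has no nontrivial invariant structure on the fibres over $\calA_0$ — which is precisely what forces the join to jump all the way up to $\calA_\gamma$. To close the recursion one needs each child $Q$ to again carry such a self-joining description, so the construction must be genuinely self-similar and the whole tree $\bbN^{<\infty}$ then grows level by level.

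For the entropy bound I would arrange the entire tree to live in the lattice interval $[\calA_0,\calB(\partial_P(\Gamma,\mu))]$, so that every $\calA_\alpha$ factors onto the fixed nontrivial boundary $\calA_0$. Monotonicity of Furstenberg entropy under factors then yields $h_\mu(B_\alpha,\nu_\alpha) \geq h_\mu(\calA_0) =: c > 0$ simultaneously for all $\alpha$, and in particular this bound persists for the meets reached down infinite branches (where the boundaries strictly decrease but stay above $\calA_0$). Nontriviality and pairwise distinctness of the $(B_\alpha,\nu_\alpha)$ follow from strictness of the factor maps together with the fact, recalled in the excerpt, that Furstenberg entropy is strictly decreasing along nontrivial factors of boundaries.

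The hard part will be producing, $\Gamma$-equivariantly and inside the genuine Poisson boundary, the ingredients of the branching move: a countable family of fibrewise automorphisms $\phi_i$ that (a) exist, (b) are pairwise generic enough to force $\calA_{(\gamma,i)} \vee \calA_{(\gamma,j)} = \calA_\gamma$ rather than collapsing or coinciding, and (c) can be reproduced at every node so the recursion terminates in a full copy of $\bbN^{<\infty}$; and, underlying all of this, the verification that the self-joinings $Q \times_{\calA_0} Q$ are themselves realized as factors of $\partial_P(\Gamma,\mu)$. This is where the geometry of $\Gamma$ must enter. Using Kaimanovich's identification of $\partial_P(\Gamma,\mu)$ with the Gromov boundary $(\partial\Gamma,\nu)$ under the finite logarithmic moment hypothesis, I would exploit that a non-elementary hyperbolic group contains infinitely many independent loxodromic elements, and hence a free subgroup $F_2 \leq \Gamma$ whose limit set supplies a first base boundary $\calA_0$ with $h_\mu(\calA_0) > 0$, together with the self-similar cylinder (``shadow'') structure of $\partial\Gamma$ as the source of the equivariant symmetries $\phi_i$. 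Proving that these symmetries give strict, pairwise-generating children — the measure-theoretic genericity estimate controlling $\phi_i\,\phi_j^{-1}$ on the fibres — is the delicate point that I expect to occupy the bulk of the argument.
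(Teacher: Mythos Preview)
Your proposal takes a route that is entirely different from the paper's, and the central mechanism you rely on --- a countable supply of $\Gamma$-equivariant relative automorphisms of a boundary, together with a realization of a relatively independent self-joining inside $\partial_P(\Gamma,\mu)$ --- is precisely the piece that does not materialize.

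The paper never looks for internal symmetries of a boundary. Instead it uses the fact that non-elementary hyperbolic groups have an abundance of non-elementary hyperbolic \emph{quotients}: Ol'shanskii's theorem lets one build, for any prescribed finite set, a non-elementary hyperbolic quotient into which that set injects. Iterating this with carefully chosen witness sets yields a tree $\{Q_\alpha\}_{\alpha\in\bbN^{<\infty}}$ of quotients of $\Gamma$, and one takes $(B_\alpha,\nu_\alpha)$ to be the Poisson boundary of $(Q_\alpha,\mu_\alpha)$, identified via Kaimanovich with $(\partial Q_\alpha,\nu_\alpha)$ and viewed as a $(\Gamma,\mu)$-boundary through the quotient map. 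Distinctness is read off from the essential kernels $E(Q_\alpha)N_\alpha$ of the $\Gamma$-actions, which are pairwise distinct normal subgroups by construction. The uniform entropy floor comes not from a common factor $\calA_0$ but from first passing to a hyperbolic quotient with Property~(T), so that Nevo's spectral gap forces every nontrivial boundary entropy above a fixed $c>0$.

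The gap in your plan is concrete. For a hyperbolic group acting on its Gromov boundary (or on any measurable factor thereof), $\Gamma$-equivariant automorphisms are extremely scarce: such a map must commute with the entire $\Gamma$-action, and boundary actions of non-elementary hyperbolic groups are rigid enough that the equivariant automorphism group is typically trivial. Loxodromic dynamics and shadow combinatorics do not help here, since those are features of the $\Gamma$-action itself, not of its centralizer. Separately, the relatively independent joining $Q\times_{\calA_0}Q$ carries the diagonal flip and is in general \emph{not} $\mu$-proximal, so it need not be a $(\Gamma,\mu)$-boundary at all and there is no reason for it to embed in $\partial_P(\Gamma,\mu)$; in the boundary lattice the join $Q\vee Q$ is just $Q$ again. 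Without a genuine source of equivariant symmetry the branching move cannot be executed even once, and the recursion never begins.
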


\begin{corollary}
For every non-elementary hyperbolic group $\Gamma$, and probability measure $\mu$ with finite logarithmic moment, both $\mathcal{BL}(\Gamma,\mu)$ and $\calH_\text{bound}(\Gamma,\mu)$ are infinite.
\end{corollary}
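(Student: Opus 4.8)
The plan is to derive both claims directly from Theorem~\ref{hyperbolic boundaries}, whose hypotheses are exactly those of the corollary. The statement that $\mathcal{BL}(\Gamma,\mu)$ is infinite is immediate: Theorem~\ref{hyperbolic boundaries} produces a family $\{(B_\alpha,\nu_\alpha)\mid \alpha\in\bbN^{<\infty}\}$ of \emph{distinct} boundaries indexed by $\bbN^{<\infty}=\bigcup_{n=0}^{\infty}\bbN^{n}$, which is an infinite set, so the lattice already contains infinitely many elements.

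For the entropy spectrum the point requiring care is that distinct boundaries need not carry distinct entropies, so I would not argue from the cardinality of the family. Instead I would isolate a single infinite chain in the tree $\bbN^{<\infty}$ and invoke the strict monotonicity of Furstenberg entropy under non-trivial factors of boundaries recorded after~\eqref{entropy}. Concretely, take the initial segments $\alpha_n=(1,\ldots,1)\in\bbN^{n}$ of the constant sequence, with $\alpha_0=\emptyset$. For $m\le n$ the sequence $\alpha_m$ is a prefix of $\alpha_n$, so their longest common prefix is $\alpha_m$, and the lattice identity of Theorem~\ref{hyperbolic boundaries} gives $B_{\alpha_m}\vee B_{\alpha_n}=B_{\alpha_m}$; equivalently $B_{\alpha_m}\ge B_{\alpha_n}$, i.e.\ there is a $\Gamma$-equivariant factor map $B_{\alpha_m}\to B_{\alpha_n}$.

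Since the $B_{\alpha_n}$ are pairwise distinct boundaries, each such factor with $m<n$ is non-trivial and hence strictly entropy-decreasing. This produces a strictly decreasing sequence
\[
h_\mu(B_{\alpha_0},\nu_{\alpha_0})>h_\mu(B_{\alpha_1},\nu_{\alpha_1})>h_\mu(B_{\alpha_2},\nu_{\alpha_2})>\cdots ,
\]
all of whose terms lie in $\calH_\text{bound}(\Gamma,\mu)$. A strictly monotone sequence of real numbers takes infinitely many distinct values, so $\calH_\text{bound}(\Gamma,\mu)$ is infinite; this remains true even though the values are confined to the bounded range $[c,h_{RW}(\mu)]$, where they merely accumulate at some limit $\ge c$. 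The only delicate step is the translation between the prefix order on $\bbN^{<\infty}$ and the factor order on $\mathcal{BL}(\Gamma,\mu)$ — namely checking that the join identity of Theorem~\ref{hyperbolic boundaries} forces $B_{\alpha_m}\ge B_{\alpha_n}$ along the chain — after which strict monotonicity of entropy does all the remaining work and no further input about hyperbolic groups is needed.
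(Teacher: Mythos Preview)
Your argument is correct and matches what the paper intends: the corollary is stated immediately after Theorem~\ref{hyperbolic boundaries} with no separate proof, so you are simply supplying the expected details. Your care in extracting a single branch $\alpha_0<\alpha_1<\cdots$ and invoking strict monotonicity of entropy under non-trivial boundary factors (the fact recorded just after~\eqref{entropy}) is exactly the right way to conclude that $\calH_\text{bound}(\Gamma,\mu)$, not just $\mathcal{BL}(\Gamma,\mu)$, is infinite.
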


In a different direction, there are several topologies that one can place on $\mathcal{BL}(\Gamma,\mu)$ that are compatible with the lattice structure. Given that all stationary boundaries are quotients of the Poisson boundary, we may conflate boundaries with $\Gamma$-invariant sub $\sigma$-algebras of the $\sigma$-algebra $\calA_P$ on the Poisson boundary. This allows one to use any one of the various topologies that have been defined on the set of sub-$\sigma$-algebras. Given a standard probability space $(X,\calX,\lambda)$, we say that a sequence $\calA_n$ of sub-$\sigma$-algebras of $\calX$ converges in the {\it $L^p$-strong topology} to $\calA$ if the conditional expectation operators corresponding to $\calA_n$ converges to $\calA$ in the $L^p(X,\lambda)$ strong operator topology. In other words $\calA_n \ra_p \calA$ if and only if for all $f \in L^p(X,\lambda)$

\[ \lim_{n \ra \infty} \| \bbE(f | \calA_n) - \bbE(f | \calA) \|_p =0 \]

In Bj\"{o}rklund-Hartman-Oppelmayer \cite{BHO} consider $L^1$-strong convergence and Kudo limits, and in this setting prove that entropy is continuous, and that convergence to the Poisson or trivial boundary is implied by the convergence of entropy to the maximal and minimal value respectively. Recall that given a collection of boundaries, $\{(B_\sigma,\nu_\sigma) | \sigma \in \Sigma\} \subseteq \mathcal{BL}(\Gamma,\mu)$ the supremum $(B^+,\nu^+) = \sup_\sigma(B_\sigma,\nu_\sigma)$ of $\Sigma$ is the smallest boundary so which is greater than all the $(B_\sigma,\nu_\sigma)$, and the infimum $(B^-,\nu^-)$ is defined mutatis mutandis. We show that Furstenberg entropy is compatible for sufficiently monotone families of boundaries in $\mathcal{BL}(\Gamma,\mu)$. 

\begin{mthm}\label{order and entropy thm}
Given a collection $\{(B_\sigma,\nu_\sigma) ~|~ \sigma \in \Sigma\}$ of boundaries closed under joins and meets, then the supremum $(B^+,\nu^+)$ is the unique boundary so that
\begin{enumerate}
    \item for all $\sigma \in \Sigma$, $(B^+,\nu^+)\geq(B_\sigma,\nu_\sigma)$
    \item $h_\mu(B^+,\nu^+) = \sup_\sigma h_\mu(B_\sigma,\mu_\sigma)$
\end{enumerate}
Assuming that $\mu$ has finite support, then the infimum $(B^-,\nu^-)$ of the collection is the unique boundary so that
\begin{enumerate}
    \setcounter{enumi}{2}
    \item for all $\sigma \in \Sigma$, $(B^-,\nu^-)\leq(B_\sigma,\nu_\sigma)$
    \item $h_\mu(B^-,\nu^-) = \inf_\sigma h_\mu(B_\sigma,\mu_\sigma)$
\end{enumerate}
\end{mthm}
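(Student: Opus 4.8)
The plan is to work through the identification, standard in this theory, of boundaries with $\Gamma$-invariant sub-$\sigma$-algebras of the $\sigma$-algebra $\calA_P$ on the Poisson boundary $(\partial_P,\nu_P)$: every such sub-$\sigma$-algebra $\calA$ is a factor of $(\partial_P,\nu_P)$ and hence a $(\Gamma,\mu)$-boundary, the order corresponds to inclusion, the join to the generated $\sigma$-algebra, and the meet to the intersection. Since $\Gamma$-invariant sub-$\sigma$-algebras are stable under arbitrary generated joins $\bigvee$ and arbitrary intersections $\bigcap$, the lattice $\mathcal{BL}(\Gamma,\mu)$ is complete; in particular $(B^+,\nu^+)$ and $(B^-,\nu^-)$ exist, carried by $\calA^+=\bigvee_\sigma\calA_\sigma$ and $\calA^-=\bigcap_\sigma\calA_\sigma$ respectively. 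Properties (1) and (3) are then just the universal properties of $\sup$ and $\inf$, and monotonicity of $h_\mu$ gives $h_\mu(B^+,\nu^+)\geq\sup_\sigma h_\mu(B_\sigma,\nu_\sigma)$ and $h_\mu(B^-,\nu^-)\leq\inf_\sigma h_\mu(B_\sigma,\nu_\sigma)$ for free. The content is the reverse inequalities together with uniqueness.

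For the supremum, let $s=\sup_\sigma h_\mu(B_\sigma,\nu_\sigma)$ and choose $\sigma_n$ with $h_\mu(B_{\sigma_n},\nu_{\sigma_n})\to s$. Using closure under finite joins, set $(B_{\tau_n},\nu_{\tau_n})=\bigvee_{k\leq n}(B_{\sigma_k},\nu_{\sigma_k})$; this lies in the family, so $h_\mu(B_{\tau_n},\nu_{\tau_n})\leq s$, while monotonicity forces the increasing sequence $h_\mu(B_{\tau_n},\nu_{\tau_n})$ up to $s$. The corresponding $\sigma$-algebras increase to $\calA^*=\bigvee_n\calA_{\tau_n}$, carrying a boundary $(B^*,\nu^*)$, and martingale convergence $\bbE(f\mid\calA_{\tau_n})\to\bbE(f\mid\calA^*)$ yields $L^1$-strong convergence; hence by continuity of Furstenberg entropy along such monotone limits \cite{BHO} (alternatively by applying Fatou's lemma to $h_\mu(\calA)=\sum_g\mu(g)\int_{\partial_P}-\log\bbE(\tfrac{\dd g\iv\nu_P}{\dd\nu_P}\mid\calA)\dd\nu_P$ together with monotonicity) we get $h_\mu(B^*,\nu^*)=s$. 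It remains to identify $(B^*,\nu^*)$ with $(B^+,\nu^+)$. Fix $\sigma\in\Sigma$: each $(B_{\tau_n}\vee B_\sigma,\nu_{\tau_n}\vee\nu_\sigma)$ lies in the family, so has entropy $\leq s$, and these increase to $(B^*\vee B_\sigma,\nu^*\vee\nu_\sigma)$, whence continuity gives $h_\mu(B^*\vee B_\sigma)\leq s=h_\mu(B^*,\nu^*)$. Since $B^*\vee B_\sigma\geq B^*$, the strict monotonicity of entropy under nontrivial factors between boundaries forces $B^*\vee B_\sigma=B^*$, i.e. $B_\sigma\leq B^*$. As this holds for every $\sigma$, we get $B^*\geq B^+$, while $B^*\leq B^+$ is clear; so $B^*=B^+$ and $h_\mu(B^+,\nu^+)=s$, proving (2).

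For the infimum the argument is dual, but continuity of entropy along decreasing (reverse-martingale) limits is genuinely where finite support of $\mu$ enters. The key point is a pointwise lower bound: from stationarity $\nu=\sum_g\mu(g)g\nu$ one has $\mu(g)\tfrac{\dd g\nu}{\dd\nu}\leq 1$ for every $g\in\supp\mu$, and the cocycle relation $\tfrac{\dd g\iv\nu}{\dd\nu}(y)=[\tfrac{\dd g\nu}{\dd\nu}(g\iv y)]\iv$ then yields $\tfrac{\dd g\iv\nu}{\dd\nu}\geq\mu(g)$ uniformly, on every boundary. Hence for finitely supported $\mu$ the integrand $-\log\tfrac{\dd g\iv\nu}{\dd\nu}$ is bounded above by the constant $-\log\mu(g)$, while its negative part is dominated by the uniformly integrable reverse martingale $\bbE(\tfrac{\dd g\iv\nu_P}{\dd\nu_P}\mid\calA_n)$; bounded and dominated convergence then give continuity of $h_\mu$ along decreasing $\sigma$-algebras. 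With this in hand one repeats the scheme: taking $t=\inf_\sigma h_\mu(B_\sigma,\nu_\sigma)$, choosing $\sigma_n$ realizing it, and setting $(B_{\tau_n},\nu_{\tau_n})=\bigwedge_{k\leq n}(B_{\sigma_k},\nu_{\sigma_k})$ (in the family, by closure under meets), one obtains a decreasing sequence of entropies tending to $t$, whose intersection carries a boundary $(B_*,\nu_*)$ with $h_\mu(B_*,\nu_*)=t$. For each $\sigma$ the members $B_{\tau_n}\wedge B_\sigma$ have entropy $\geq t$ and decrease to $B_*\wedge B_\sigma$, so $h_\mu(B_*\wedge B_\sigma)\geq t=h_\mu(B_*,\nu_*)$; since $B_*\wedge B_\sigma\leq B_*$, strict monotonicity forces $B_*\wedge B_\sigma=B_*$, i.e. $B_\sigma\geq B_*$. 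Thus $B_*\leq B^-\leq B_*$, giving $B_*=B^-$ and property (4).

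Finally, uniqueness in both cases is immediate from strict monotonicity. If $(B',\nu')$ satisfies (1) and (2), then $B'\geq B_\sigma$ for all $\sigma$ gives $B'\geq B^+$, and $h_\mu(B',\nu')=s=h_\mu(B^+,\nu^+)$ together with the strict entropy drop under a nontrivial factor $B'\to B^+$ forces $B'=B^+$; the argument under (3)--(4) is identical. The main obstacle is precisely the continuity of Furstenberg entropy along the limiting families: it is harmless in the increasing (supremum) case, where monotonicity and Fatou suffice without any moment hypothesis, but in the decreasing (infimum) case it requires control of the Radon--Nikodym cocycle through a reverse-martingale limit, and the finite-support hypothesis is exactly what supplies the uniform bound $\tfrac{\dd g\iv\nu}{\dd\nu}\geq\mu(g)$ that makes this possible.
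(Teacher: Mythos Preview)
Your proof is correct and follows the same overall strategy as the paper's: reduce the arbitrary family to a countable monotone chain using closure under finite joins (resp.\ meets), establish entropy continuity along that chain via (reverse) martingale convergence, and use strict monotonicity of entropy between boundaries for the uniqueness clauses. The paper differs only in that it proves the monotone entropy-continuity statements from scratch (its Propositions~\ref{increasinglimits} and~\ref{decreasing limits}, via an explicit submartingale / reverse-martingale argument for the Radon--Nikodym cocycles, together with the bound of Lemma~\ref{finitelysupported} in the decreasing case) rather than invoking \cite{BHO}, and it leaves implicit the step you carry out carefully---namely, that the limit $B^*$ of the countable chain actually coincides with the full supremum $B^+$, which you obtain by joining the chain with an arbitrary $B_\sigma$, passing to the limit again, and applying strict monotonicity once more.
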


The proof is elementary in the sense that it does not may use of any additional information about $\Gamma$ or any structures on $\mathcal{BL}(\Gamma,\mu)$ besides the order. While this result is purely order theoretic, the next result is concerned with the topological properties of $\mathcal{BL}(\Gamma,\mu)$.

\begin{mthm}\label{entropy is continuous thm}
For a measured group $(\Gamma,\mu)$ so that $H(\mu) < \infty$, the boundary lattice $\mathcal{BL}(\Gamma,\mu)$ is a compact topological lattice when equipped with the $L^2$-strong topology, and the map $(B,\nu) \mapsto h_\mu(B,\nu)$ sending a boundary to its Furstenberg entropy is a continuous, hence closed map.
\end{mthm}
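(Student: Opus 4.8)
The plan is to transport everything to the Poisson boundary. Following the identification in the text, I regard a boundary $(B,\nu)$ as the $\Gamma$-invariant sub-$\sigma$-algebra $\calA_B\subseteq\calA_P$ it induces, and I regard $\calA_B$ as the conditional expectation $P_B=\bbE(\,\cdot\mid\calA_B)$, which is the orthogonal projection of the separable Hilbert space $\calH=L^2(\partial_P,\nu_P)$ onto $L^2(\calA_B)$. By definition $\calA_n\to\calA$ in the $L^2$-strong topology exactly when $P_{\calA_n}\to P_{\calA}$ in the strong operator topology. Since $\calH$ is separable, the strong and weak operator topologies are metrizable on the bounded set of these projections, so $\mathcal{BL}(\Gamma,\mu)$ is metrizable and I may argue throughout along sequences. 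I would then prove the theorem in three parts: compactness, continuity of $h_\mu$, and continuity of the operations $\vee,\wedge$.

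For compactness, take boundaries $(B_n,\nu_n)$ with projections $P_n$ and use weak-operator compactness of the unit ball of $B(\calH)$ to extract a subsequence $P_{n_k}\rightharpoonup P$. Positivity, unitality ($P1=1$), self-adjointness and $\norm{P}\le1$ all survive weak-operator limits, so $P$ is a positive unital self-adjoint contraction; the one property that does not survive for free is idempotency. Supposing for the moment that $P$ is idempotent, it is the conditional expectation onto $\calA_\infty=\{A\in\calA_P:P1_A=1_A\}$, and the weak limit upgrades to a strong one at no cost: for each $f$ one has $\norm{P_{n_k}f}^2=\ip{P_{n_k}f}{f}\to\ip{Pf}{f}=\norm{Pf}^2$, and weak convergence together with convergence of the norms forces $P_{n_k}f\to Pf$ in $\calH$. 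Thus $\calA_{n_k}\to\calA_\infty$ in the $L^2$-strong topology, and it remains only to check that $\calA_\infty$ is itself a boundary.

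I expect the idempotency of $P$ (equivalently, that the limiting object is a genuine sub-$\sigma$-algebra and a boundary) to be the crux of the whole proof, and it is precisely here that the boundary hypothesis cannot be dropped. For arbitrary sub-$\sigma$-algebras the statement is false: choosing sets $A_n$ with $1_{A_n}\rightharpoonup h$ for a function $h$ that is not an indicator, the weak-operator limit of the $\bbE(\,\cdot\mid\sigma(A_n))$ is a positive unital self-adjoint operator that fails to be idempotent, so no subsequence converges strongly and the ambient space of all sub-$\sigma$-algebras is not compact. What rules out this smearing for boundaries is the rigidity of the conditional (fibre) measures coming from stationarity and proximality. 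The plan is to run the weak-operator convergence on the Radon--Nikodym cocycles $\phi_g=\frac{\dd g\iv\nu_P}{\dd\nu_P}$: stationarity endows the $P_n\phi_g$ with a martingale structure that passes to the limit and makes $\calA_\infty$ $\mu$-stationary, while proximality of the $B_n$ --- the collapse of $g_1\cdots g_m\nu_n$ onto Dirac masses --- passes to $\calA_\infty$ and forces the limiting conditional measures to stay sharp, ruling out a non-indicator $h$ and yielding $P^2=P$. Establishing that these two properties are genuinely closed under the limit, and hence that $\calA_\infty\in\mathcal{BL}(\Gamma,\mu)$, is the main obstacle.

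For continuity of entropy I would use the pushforward-of-Radon--Nikodym identity $\frac{\dd g\iv\nu}{\dd\nu}\circ\pi_B=\bbE(\phi_g\mid\calA_B)$ to write $h_\mu(B,\nu)=\sum_{g}\mu(g)\int_{\partial_P}-\log\bbE(\phi_g\mid\calA_B)\dd\nu_P$. Each summand is a nonnegative functional of $\calA_B$, bounded above by $\int-\log\phi_g\dd\nu_P$ via conditional Jensen, and $\sum_g\mu(g)\int-\log\phi_g\dd\nu_P=h_{RW}(\mu)<\infty$ supplies a $\mu$-summable dominating function, reducing continuity to the inner functionals; there the $L^2$-strong convergence also yields $L^1$-strong convergence, so $\bbE(\phi_g\mid\calA_n)\to\bbE(\phi_g\mid\calA)$ in $L^1$, hence in measure, and convexity of $-\log$ with uniform integrability promotes this to convergence of the integrals. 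Finally, for the topological lattice structure I would argue by a closed-graph criterion on the compact metric (hence Hausdorff) space $\mathcal{BL}(\Gamma,\mu)$: the order is closed in the $L^2$-strong topology, since $\calA\le\calB\iff P_\calB P_\calA=P_\calA$ is a relation stable under strong limits, so every subsequential limit $E$ of $\calA_n\wedge\calB_n$ satisfies $E\le\calA\wedge\calB$ when $\calA_n\to\calA$ and $\calB_n\to\calB$; matching entropies through the continuity just proved, together with the entropy characterisation of meets and joins in Theorem~\ref{order and entropy thm} and the (super/sub)modularity of $h_\mu$, then forces $E=\calA\wedge\calB$ by strict monotonicity, and dually for $\vee$. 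With compactness and continuity of $h_\mu$ in hand, $h_\mu$ is a closed map, so $\calH_{\text{bound}}(\Gamma,\mu)$ is closed.
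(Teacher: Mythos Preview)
Your central claim---that the ambient space of \emph{all} sub-$\sigma$-algebras of a Lebesgue space fails to be $L^2$-strongly compact, so that idempotency of the limiting operator must be rescued by stationarity and proximality---is incorrect, and this misdiagnosis drives you toward an unnecessarily elaborate strategy. In fact Beissner--T\"olle \cite{BT} prove precisely that the set of all sub-$\sigma$-algebras (equivalently, conditional expectation operators) of a standard probability space is compact in the $L^2$-strong topology. Your proposed counterexample does not work: if $\nu(A_n)=\tfrac12$ and $1_{A_n}\rightharpoonup\tfrac12$, then for any $f$ one has $\bbE(f\mid\sigma(A_n))=c_n1_{A_n}+d_n1_{A_n^c}$ with $c_n=2\int_{A_n}f\to\int f$ and $d_n\to\int f$, so $\bbE(f\mid\sigma(A_n))\to\int f$ \emph{strongly} in $L^2$; the sequence converges to the trivial $\sigma$-algebra rather than to a non-idempotent limit. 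The ``smearing'' you worry about simply does not occur at the level of conditional expectations.

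Once compactness of the ambient space is taken as input, the paper's argument is very short: the subset of $\Gamma$-invariant sub-$\sigma$-algebras is closed (the relation $U_gP=PU_g$ for the Koopman unitaries $U_g$ passes to strong limits), and every $\Gamma$-invariant sub-$\sigma$-algebra of $\calA_P$ automatically corresponds to a $(\Gamma,\mu)$-boundary, since both stationarity and proximality are inherited by equivariant factors of the Poisson boundary. So $\mathcal{BL}(\Gamma,\mu)$ is a closed subset of a compact space. For continuity of entropy the paper does essentially what you propose, only packaged differently: it observes that $L^2$-strong convergence implies $L^1$-strong convergence by a $3\epsilon$ density argument, then cites \cite{BHO} for the implication $L^1$-strong $\Rightarrow$ entropy convergence (your dominated-convergence sketch is the content of that citation). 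Continuity of $\vee$ and $\wedge$ is likewise imported from \cite{BT} rather than derived from Theorem~\ref{order and entropy thm}. In short, your entropy-continuity and lattice-operation arguments are fine, but the compactness discussion is built on a false premise and should be replaced by the black boxes \cite{BT} and the trivial observation about $\Gamma$-invariance.
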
 

% Leaving the world of operator topologies, Sayag-Shalom and Sayag (\cite{SaSh} and \cite{Sayag}) study the space of boundedly quasi-invariant actions of $\Gamma$: $BQI(\Gamma,F)$ is the space of quasi-invariant $\Gamma$ probability spaces $(X,m)$ for which Radon-Nikodym derivatives are bounded by a function $F:\Gamma \ra \bbR$, i.e. $\left|\frac{\dd gm}{\dd m}(x) \right|\leq F(g)$ $m$-a.e.. They show that $BQI(\Gamma,F)$ is closed under ultralimits, and that Furstenberg entropy is continuous under ultralimits; that is to say that the entropy of an ultralimit of spaces in $BQI(\Gamma, F)$ is the ultralimit of the entropies of those spaces. We prove the following 

% \begin{mthm}\label{ultralimit containment}
% For any finitely supported generating measure $\mu$ on $\Gamma$, then $\mathcal{BL}(\Gamma,\mu) \subset BQI(\Gamma,M)$ for some function $M$, and is closed under ultralimits. 
% \end{mthm}

One easily obtains the corollary:

\begin{corollary}\label{spectrum closed}
The boundary entropy spectrum $\calH_\text{bound}(\Gamma,\mu)$ is compact, and hence closed.
\end{corollary}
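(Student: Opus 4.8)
The plan is to obtain the corollary as a purely formal consequence of Theorem~\ref{entropy is continuous thm}, requiring no further group-theoretic or measure-theoretic input. By definition the boundary entropy spectrum is exactly the image of the Furstenberg entropy functional, $\calH_\text{bound}(\Gamma,\mu) = h_\mu\bigl(\mathcal{BL}(\Gamma,\mu)\bigr)$, regarded as a subset of $\bbR$. Theorem~\ref{entropy is continuous thm} supplies precisely the two ingredients needed for the argument: under the standing hypothesis $H(\mu)<\infty$, the domain $\mathcal{BL}(\Gamma,\mu)$ is compact in the $L^2$-strong topology, and the map $(B,\nu)\mapsto h_\mu(B,\nu)$ is continuous for that topology.

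First I would invoke the elementary fact that the image of a compact space under a continuous map is compact; applied to $h_\mu$, this shows that $\calH_\text{bound}(\Gamma,\mu)$ is a compact subset of $\bbR$. Then, since $\bbR$ is Hausdorff and compact subsets of a Hausdorff space are closed, I would conclude that $\calH_\text{bound}(\Gamma,\mu)$ is closed, which is the remaining assertion. There is no genuine obstacle at the level of the corollary itself: all of the analytic content---in particular the compactness of the lattice in the $L^2$-strong topology and the continuity of the entropy functional---is already packaged into Theorem~\ref{entropy is continuous thm}, and the present statement merely transports those properties across the continuous surjection $h_\mu$ onto its image.
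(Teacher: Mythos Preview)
Your proposal is correct and matches the paper's own argument essentially verbatim: the paper simply notes that Furstenberg entropy is continuous on the compact space $\mathcal{BL}(\Gamma,\mu)$, hence its range is compact and therefore closed. You have spelled out the same reasoning with a bit more detail (explicitly invoking that $\bbR$ is Hausdorff), but there is no substantive difference.
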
 

In light of this and \cite{TZh}, we can say that for the uniform random walk on the free group, $\calH_\text{bound}(F_d,\mu)$ contains a continuum sized, perfect set.

%%%%%%%%%%%%%%%%%%%%%%%%%%%%%%%%%%%%%%%%%%%%%%%%%%%%%%%%%%%%%%%%%%%%%%%%%%%%%%%%%%%%%%%%%%%%%%%%%%%%

\section{Background}

\subsection{Stationary Spaces}

Let $\Gamma$ be a countable discrete group, and $\mu$ a probability on $\Gamma$. Suppose that $( X,\calX,\lambda)$ is a Lebesgue probability space with a measurable $\Gamma$ action. The convolution of $\mu$ and $\lambda$ is $\mu*\lambda = \int_\Gamma g \lambda \dd \mu(g)$. Briefly, we say that $(X, \calX, \lambda)$ is {\it $(\Gamma,\mu)$-stationary} (or simply {\it stationary}) if $\nu$ is invariant under convolution by $\mu$, i.e. $\mu * \nu  = \nu$. Define the convolution powers of $\mu$ via $\mu^n = \mu * \cdots * \mu \in \Prob(\Gamma)$. We can assume that up to measurable isomorphism that $X$ is compact, and the action of $\Gamma$ is continuous. The next lemma due to Furstenberg \cite{Furst1} is fundamental to the study of stationary spaces.

\begin{lemma}\label{FurstenbergLemma}
Let $(X,\nu)$ be a compact stationary space, then for $\mu^\bbN$-a.e. $\gamma = (g_i) \in \Gamma^\bbN$ the weak* limit
\[ 
\lim_{n \ra \infty} g_1\ldots g_n \nu = \nu_\gamma 
\]
exists, and the map from $\Gamma^\bbN$ to $\Prob(X)$ via $\gamma \mapsto \nu_\gamma$ is measurable.
\end{lemma}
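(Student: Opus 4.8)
The plan is to exhibit, for each continuous test function, a bounded martingale on path space and apply the martingale convergence theorem, then upgrade from convergence for a single test function to weak* convergence of the measures $g_1\cdots g_n\nu$ using separability of $C(X)$. Throughout I use that, as noted before the statement, we may take $X$ compact metrizable with $\Gamma$ acting continuously.

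First I would fix $f \in C(X)$ and define on the path space $(\Gamma^\bbN, \mu^\bbN)$ the random variables $Z_n^f(\gamma) = \int_X f \dd(g_1 \cdots g_n \nu)$, with the filtration $\calF_n = \sigma(g_1, \ldots, g_n)$. The central computation is that $(Z_n^f)$ is a martingale: conditioning on $\calF_n$ and integrating over the independent increment $h = g_{n+1} \sim \mu$ gives
\[
\bbE[Z_{n+1}^f \mid \calF_n] = \int_\Gamma \int_X f \dd(g_1 \cdots g_n h \nu) \dd\mu(h) = \int_X f \dd\big(g_1 \cdots g_n (\mu * \nu)\big) = Z_n^f,
\]
where the middle equality uses linearity of the pushforward by $g_1\cdots g_n$ and the last is exactly the stationarity hypothesis $\mu * \nu = \nu$. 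Since $g_1 \cdots g_n \nu$ is a probability measure we have $|Z_n^f| \leq \norm{f}_\infty$, so the martingale is bounded and Doob's martingale convergence theorem gives that $Z_n^f$ converges for $\mu^\bbN$-a.e. $\gamma$.

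Next I would promote this to weak* convergence of the measures. Since $X$ is compact metrizable, $C(X)$ is separable; fix a countable dense set $\{f_k\} \subseteq C(X)$. Almost sure convergence of $Z_n^{f_k}$ holds off a null set $N_k$, and on the full-measure set $\Gamma^\bbN \setminus \bigcup_k N_k$ every $Z_n^{f_k}$ converges. A standard $\epsilon/3$ approximation using $|Z_n^{f - f_k}| \leq \norm{f - f_k}_\infty$ then shows $Z_n^f$ is Cauchy for every $f \in C(X)$, so $g_1 \cdots g_n \nu$ converges weak* to a limit $\nu_\gamma$ for a.e. $\gamma$; equivalently one may invoke weak* compactness and metrizability of $\Prob(X)$ and observe the limit is pinned down on a dense family of test functions.

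Finally, measurability of $\gamma \mapsto \nu_\gamma$ follows because for each $f$ the map $\gamma \mapsto \int_X f \dd\nu_\gamma = \lim_n Z_n^f(\gamma)$ is a pointwise limit of $\calF_n$-measurable functions, hence measurable; since the weak* Borel structure on $\Prob(X)$ is generated by the evaluations $\lambda \mapsto \int f \dd\lambda$ over a countable dense family, this upgrades to measurability of the full map. As this is a classical result, I expect no serious obstacle; the one step requiring care is the martingale identity, where one must correctly factor $g_1 \cdots g_{n+1}\nu = (g_1 \cdots g_n)(g_{n+1}\nu)$ and apply equivariance of the pushforward together with stationarity. The passage to weak* convergence and the measurability claim are then routine consequences of separability of $C(X)$.
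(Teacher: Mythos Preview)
Your proof is correct and is precisely the classical martingale argument: the key point is that stationarity $\mu*\nu=\nu$ makes $\gamma \mapsto \int f\,\dd(g_1\cdots g_n\nu)$ a bounded martingale for each $f\in C(X)$, and separability of $C(X)$ then upgrades pointwise convergence along a countable dense family to weak* convergence of the measures. The paper itself does not prove this lemma but attributes it to Furstenberg \cite{Furst1}; your argument is the standard one found there and in subsequent expositions, so there is nothing to compare.
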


A {\it $(\Gamma,\mu)$-boundary} is a stationary space $(B,\nu)$ which is also {\it $\mu$-proximal}: for $\mu^\bbN$-almost every walk $\omega$ on $\Gamma$, the limit $\nu_\gamma = \delta_{\beta_B(\omega)}$ is a Dirac mass. 

According to lemma \ref{FurstenbergLemma} any measurable $\Gamma$-equivariant factor $(X,\lambda)$ of a boundary $(B,\nu) \ra (X,\lambda)$ is also a boundary, both stationarity and proximality are inherited by factors. Thus every boundary can be realized as a factor of, or identified with a complete $\sigma$-algebra of, the unique maximal boundary, called the Poisson boundary. For any of the equivalent constructions of the Poisson boundary see \cite{Furman} or \cite{KV}.

We say that a measure $\mu$ is {\it generating} if $\Gamma = \bigcup_{n=0}^\infty\supp(\mu^n)$. If $\mu$ is generating then the $\Gamma$ action on any stationary space $(X,\lambda)$ is measure class preserving. 

\begin{lemma}\label{finitelysupported}
For a countable group $\Gamma$, and a generating, finitely supported probability $\mu$. Then for any stationary $(\Gamma,\mu)$ space $(X,\nu)$, There is a positive function $M:\Gamma \ra (1,\infty)$ so that the Radon-Nikodym derivative $\frac{\dd g \nu}{\nu}(x)$ exists and for $\nu$-a.e. $x$ we have
\[
 \frac{1}{M(g)} \leq \frac{\dd g \nu}{\dd \nu}(x) \leq M(g)
\]
\end{lemma}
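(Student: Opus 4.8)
The plan is to obtain the two one-sided estimates separately: first an upper bound on $\frac{\dd g\nu}{\dd \nu}$ valid for every $g \in \Gamma$, and then the lower bound as a consequence of the upper bound applied to $g\iv$. The starting observation is that stationarity propagates to all convolution powers: from $\mu * \nu = \nu$ an immediate induction, using $\mu^n * \nu = \mu^{n-1}*(\mu * \nu)$, gives $\mu^n * \nu = \nu$ for every $n$, so that $\nu = \sum_{h \in \Gamma} \mu^n(h)\, h\nu$ for each $n$. Because each summand is a nonnegative measure, this decomposition can be truncated to a single term to produce comparisons of measures.

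For the upper bound, fix $g \in \Gamma$. Since $\mu$ is generating, $g \in \supp(\mu^n)$ for some $n$, i.e. $\mu^n(g) > 0$. Discarding every term but $h = g$ in the decomposition above yields the inequality of measures $\nu \geq \mu^n(g)\, g\nu$. In particular $g\nu \ll \nu$, so the Radon--Nikodym derivative exists, and comparing densities gives $\frac{\dd g\nu}{\dd \nu}(x) \leq \frac{1}{\mu^n(g)}$ for $\nu$-a.e.\ $x$. This single step simultaneously establishes existence of the derivative and its upper bound for an arbitrary $g$, without any symmetry hypothesis on $\mu$.

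For the lower bound I would exploit the very same inequality for the inverse element. Again using that $\mu$ is generating, choose $m$ with $g\iv \in \supp(\mu^m)$; the previous step applied to $g\iv$ gives $g\iv\nu \leq \frac{1}{\mu^m(g\iv)}\nu$ as measures. Pushing this inequality forward by $g$, which preserves the pointwise order on measures, and using $g(g\iv\nu) = \nu$, produces $\nu \leq \frac{1}{\mu^m(g\iv)}\, g\nu$, equivalently $g\nu \geq \mu^m(g\iv)\,\nu$. Hence $\nu \ll g\nu$ as well, the two measures are equivalent, and $\frac{\dd g\nu}{\dd \nu}(x) \geq \mu^m(g\iv)$ for $\nu$-a.e.\ $x$. (This incidentally reproves the measure-class-preserving property quoted above.)

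Combining the two bounds, I would set $M(g) = \max\!\left(\frac{1}{\mu^n(g)},\, \frac{1}{\mu^m(g\iv)},\, 2\right)$, the final constant $2$ inserted only so that $M$ genuinely takes values in $(1,\infty)$ without disturbing the inequalities; then $\frac{1}{M(g)} \leq \frac{\dd g\nu}{\dd \nu}(x) \leq M(g)$ holds off a $g$-dependent $\nu$-null set, as required. The one genuinely non-automatic point is the lower bound: stationarity only ever bounds $g\nu$ from \emph{above}, so the inequality in the opposite direction has to be manufactured by transporting the bound for $g\iv$ forward by $g$, and this is exactly where the generating hypothesis is essential, since it is what guarantees that $g\iv$ (and not merely $g$) lies in the support of some power of $\mu$. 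I do not expect finite support of $\mu$ to be needed for this particular statement; the argument uses only that $\mu$ is generating.
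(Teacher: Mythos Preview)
Your argument is correct and essentially identical to the paper's: both use $\mu^n$-stationarity to get $\mu^n(g)\, g\nu \le \nu$ for the upper bound, and then apply the same estimate to $g\iv$ (choosing $m$ with $\mu^m(g\iv)>0$) to obtain the lower bound, arriving at $M(g)=\max\{\mu^n(g)\iv,\mu^m(g\iv)\iv\}$. Your presentation is slightly more careful in first establishing $g\nu \ll \nu$ before writing the Radon--Nikodym derivative, and your remark that finite support is not actually used is accurate---the paper's proof likewise relies only on the generating hypothesis.
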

\begin{proof}
Since $\mu$ is generating, for any $g \in \Gamma$ there is an $n$ so that $\mu^n(g)>0$. As $\nu$ is $\mu$-stationary, we have 
\[
1= \sum_{h\in\Gamma} \frac{\dd h \nu}{\dd \nu}(x) \mu^n(h) \geq \frac{\dd g \nu}{\dd \nu}(x) \mu^n(g).
\]
Hence $\frac{\dd g \nu}{\dd \nu}(x) \leq \frac{1}{\mu^n(g)}$. In a similar way one can show that for some $k$, $0<\mu^k(g^{-1}) \leq \frac{\dd g\nu}{\dd \nu}(x)$. So we can take $M(g) = \max \left \{\frac{1}{\mu^k(g^{-1})},\frac{1}{\mu^n(g)} \right\} $. 
\end{proof}

For a more complete discussion of stationary actions of groups, see the article of Furstenberg-Glasner \cite{FurstGlas}.

\subsection{Conditional Measures and Expectations}
Recall that for any factor map of probability spaces $f:(X, \nu) \rightarrow (Y, \eta)$,  $\{ \nu_y\} \subseteq \Prob(Y)$ is a {\it system of conditional measures} for $\nu$ over $\eta$ if $\nu  = \int_Y \nu_y ~d\eta $ and $\supp(\nu_y) \subseteq f^{-1}(y)$. Following the work of Rohklin \cite{Roh}, such a system exists for any measurable map of standard probability spaces, and moreover such a system is unique in the sense that if $\nu^1_y$ and $\nu^2_y$ are systems of conditional measures for $\nu$ over $\eta$ then $\nu^1_y=\nu^2_y$.

\begin{lemma}\label{fiberderivative}
Let $f:(X,\nu) \rightarrow (Y, \eta)$ be a $\Gamma$-equivariant factor map of standard probability spaces with a measure class preserving $\Gamma$ actions. For convenience if $x \in X$, set $y = f(x)$. Then for all $g \in \Gamma$ and $\nu$-a.e. $x \in X$, then $[g\nu_{g^{-1}y}] = [\nu_y]$ and, 
    \begin{align*}
        \frac{\dd g\nu}{\dd\nu}(x) = \frac{\dd g\nu_{g^{-1}y}}{\dd\nu_y}(x) \cdot \frac{\dd g\eta}{\dd\eta}(y)
    \end{align*}
\end{lemma}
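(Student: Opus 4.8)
The plan is to identify the system of conditional measures of $g\nu$ over its pushforward in two independent ways and then invoke the uniqueness of Rohlin disintegrations to equate them. Since the action is measure class preserving and $f$ is equivariant, the pushforward satisfies $f_*(g\nu) = g(f_*\nu) = g\eta$, and both $g\nu \sim \nu$ and $g\eta \sim \eta$; in particular the global density $\rho \defq \frac{\dd g\nu}{\dd\nu}$ exists and is strictly positive $\nu$-a.e.

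First I would produce the conditional measures of $g\nu$ over $g\eta$ straight from the hypothesis. Starting from the disintegration $\nu = \int_Y \nu_y \dd\eta(y)$ and applying $g$ gives $g\nu = \int_Y g\nu_y \dd\eta(y)$. Because $f$ is equivariant, $\supp(g\nu_y) \subseteq g f^{-1}(y) = f^{-1}(gy)$, so reindexing the integral by the substitution $y \mapsto g^{-1}y$ (whose effect on the base is exactly to replace $\eta$ by $g\eta$) yields $g\nu = \int_Y g\nu_{g^{-1}y}\,\dd(g\eta)(y)$, in which each $g\nu_{g^{-1}y}$ is a probability measure carried by $f^{-1}(y)$. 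By uniqueness of disintegration, $\{g\nu_{g^{-1}y}\}_y$ is the system of conditional measures of $g\nu$ over $g\eta$.

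Next I would compute the same conditional measures directly from the global density. Writing $g\nu = \rho\,\nu = \int_Y (\rho\,\nu_y)\,\dd\eta(y)$ and normalizing each fiber measure, set $c(y) \defq \int_{f^{-1}(y)} \rho \,\dd\nu_y$, so that $g\nu = \int_Y \frac{\rho\,\nu_y}{c(y)}\,\dd(c\,\eta)(y)$, where $c\,\eta$ denotes the measure with density $c$ relative to $\eta$ and $\frac{\rho\,\nu_y}{c(y)}$ is a probability measure on $f^{-1}(y)$. The normalizing factor is precisely the base density: since $f_*(\rho\,\nu) = g\eta$ and the fiberwise integral of a density against the conditional measures computes the density of the pushforward (i.e. $c = \bbE_\nu[\rho \mid f]$ descends to $\frac{\dd g\eta}{\dd\eta}$), we get $c(y) = \frac{\dd g\eta}{\dd\eta}(y)$ for $\eta$-a.e. $y$. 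Uniqueness now forces $g\nu_{g^{-1}y} = \frac{\rho}{c(y)}\,\nu_y$.

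Comparing the two descriptions delivers both conclusions: the identity $g\nu_{g^{-1}y} = \bigl(\rho / \tfrac{\dd g\eta}{\dd\eta}(y)\bigr)\,\nu_y$ exhibits $g\nu_{g^{-1}y} \ll \nu_y$ with a strictly positive density, hence the measure class equality $[g\nu_{g^{-1}y}] = [\nu_y]$, and reading off that density gives $\frac{\dd g\nu_{g^{-1}y}}{\dd\nu_y}(x) = \rho(x)/\tfrac{\dd g\eta}{\dd\eta}(y)$, which rearranges to the claimed factorization with $y = f(x)$. I expect the main obstacle to be the careful bookkeeping in the reindexing step and in justifying $c(y) = \frac{\dd g\eta}{\dd\eta}(y)$: one must verify that pushing a disintegration forward by a measure-class-preserving transformation again yields a genuine disintegration of the image measure, and that the conditional-expectation computation of the base density holds $\eta$-a.e. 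Both are standard but require applying Rohlin uniqueness to a countable generating family of sets and then a null-set argument uniform in $g$, which is legitimate precisely because $\Gamma$ is countable.
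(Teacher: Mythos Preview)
Your argument is correct and follows essentially the same route as the paper. Both proofs first establish the pushed-forward disintegration $g\nu = \int_Y g\nu_{g^{-1}y}\,\dd(g\eta)(y)$ via the same reindexing, then invoke Rohlin uniqueness; the only cosmetic difference is that you extract the density by writing down a \emph{second} disintegration of $g\nu$ over $g\eta$ (the normalized $\rho\,\nu_y$ with $c(y)=\bbE_\nu[\rho\mid f]=\tfrac{\dd g\eta}{\dd\eta}(y)$) and comparing, whereas the paper reaches the same endpoint by a direct test-function computation against the first disintegration.
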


\begin{proof}
First we see that for $\eta$ a.e. $y$ the measures $\nu_y$ and $g\nu_{g\iv y}$ are in the same measure class. This follows easily from the essential uniqueness of conditional measures, the fact that $[g\nu] = [\nu]$, and the formula 

\[g \nu = \int_Y g \nu_{g\iv y}~\dd \eta(x)\]
 
which we will establish presently. Take $\psi$ to be an arbitrary bounded measurable function on $X$, then
    \begin{align*}
        \int_X \psi(x) ~\dd g\nu(x) 
            &= \int_Y \int_X \psi(gx) ~\dd \nu_y(x) \dd\eta(y) \\
            &= \int_Y \int_X \psi(x) ~\dd g\nu_y(x) \dd\eta(y)\\
            &= \int_Y \int_X \psi(x) ~\dd g\nu_{g^{-1}y}(x) \dd g\eta(y) 
    \end{align*}
    
From this we get the desired property
    \begin{align*}
        \int_X \psi(x) ~\dd g\nu(x) &= \int_Y \int_X \psi(x) ~\dd g\nu_{g^{-1}y}(x) \dd g\eta(y) \\
            &= \int_Y \int_X \psi(x) \frac{\dd g\nu_{g^{-1}y}}{\dd\nu_y}(x) \cdot \frac{\dd g\eta}{\dd\eta}(y) ~\dd \nu_{y}(x) \dd\eta(y)\\
            &= \int_X \psi(x) \frac{\dd g\nu_{g^{-1}y}}{\dd\nu_y}(x) \cdot \frac{\dd g\eta}{\dd\eta}(y) ~\dd\nu(x)
    \end{align*}
\end{proof}

Given a measure space $(X,\calX,\lambda)$, a $\sigma$-subalgebra $\calA$ of $\calX$, and a measurable function $f \in L^1(X,\calX,\lambda)$, the {\it condtional expectation $\bbE(f|\calA)$} is defined as the unique $\calA$ measurable function so that for any other $\calA$ measurable function $h$ then 
\[ \int_X\bbE(f|\calA)\cdot h \dd\lambda = \int_X f\cdot h \dd \lambda\]
In the case that $\calA$ is the pullback $\sigma$-algebra in $X$ of a factor map $\pi:(X,\calX,\lambda) \ra (Y,\calY, \rho)$, we can form a system of conditional measures $\lambda_y$. Then $\bbE(f | \calA)(x) = \int_X f(s) \dd\lambda_{\pi(x)}(s)$ is the condition expectation operator associated to $\calA$. There is a bijective correspondence between $\sigma$-subalgebras and conditional expectation operators.

\subsection{Hyperbolic Groups}
By the work of Bowditch \cite{Bowditch}, we may think of a hyperbolic group $\Gamma$ as a group which admits a certain kind of proximal action on compact metrizable space $\partial\Gamma$. That is to say, $\Gamma$ is {\it hyperbolic} if there is a minimal $\Gamma$ action $\Gamma \curvearrowright \partial\Gamma$ so that the induced action on distinct triples $\Gamma \curvearrowright \partial\Gamma^{(3)}$ is properly discontinuous and cocompact. The space $\partial \Gamma$ is then called the {\it Gromov Boundary} of $\Gamma$. The assumptions of minimality and cocompactness insure that $\partial\Gamma$ is the unique $\Gamma$-space satisfying the above conditions up to $\Gamma$-equivariant homomorphism. The {\it elliptic radical} $E(\Gamma)$ of $\Gamma$ is the kernel of the action of $\Gamma$ on its Gromov boundary $\partial \Gamma$. Every element of $\gamma$ is either in $E(\Gamma)$ or it fixes a finite set in $\partial \Gamma$.

% \subsection{Ultralimits}
% We will give a basic outline of the construction of ultralimits of measure spaces. For the full technical details of the construction of the ultralimit of measure spaces we refer to \cite{Sayag}, \cite{SaSh} or.A {\it non-principle ultrafilter} on the natural numbers $\bbN$, is a finitely additive, 0-1 valued, probabiliy measure defined on the full power set $\calP(\bbN)$, and so that no finite set has measure 1. Fix a non-principle ultrafilter $\omega$ on $\bbN$ for the remainder of the paper. 

% Given a bounded sequence $x_n \in \bbR$ we say that $x$ is the {\it ultralimit} denoted $\omega\lim_n x_n=x$, if for every $\epsilon>0$, $\omega(\{n ~|~|x_n-x|<\epsilon \}) =1$. The definition of the ultrafilter ensures that every bounded sequence has an unique ultralimit.

% Given a sequence of measure spaces $(X_n,\nu_n)$, we can construct the {\it ultraproduct} $X = \prod_n X_n/~$ where $(x_n) ~(x'_n)$ iff $\omega(\{n| x_n = x'_n\})$. Given a sequence of measurable sets $S_n \subset X_n$ consider the {\it internal set} $S = \prod_n S_n/~ \subset X$. We can attempt to define the measure $\nu$ of $S$ via 
% \[ \nu(S)= \omega\lim_n \nu_n(S_n) \]

%%%%%%%%%%%%%%%%%%%%%%%%%%%%%%%%%%%%%%%%%%%%%%%%%%%%%%%%%%%%%%%%%%%%%%%%%%%%%%%%%%%%

\section{Proof of Theorem \ref{order and entropy thm}}

We will break theorem \ref{order and entropy thm} into two propositions, proved separately. With lemma \ref{fiberderivative} in hand we can prove the following proposition about the relation between the supremum of sequences of boundaries, and the supremum of sequences of boundary entropies

\begin{proposition} \label{increasinglimits}
Let $\Gamma$ be a countable group and $\mu$ any probability on $\Gamma$. Take $\{(B_i,\nu_i)~|~ i\in \bbN \}$ be a increasing sequence of $(\Gamma,\mu)$-boundaries: i.e. if $i<j$ then there is a factor map $\pi_{ij}:(B_j,\nu_j) \ra (B_i ,\nu_i)$. Let $\sup_i (B_i,\nu_i)$ be the smallest boundary so that $(B,\nu) \ra (B_i,\nu_i)$ implies $(B,\nu) \ra \sup_i (B_i,\nu_i)$. Then
\begin{enumerate}
    \item $h_\mu(\sup_i (B_i,\nu_i)) = \lim_i h_\mu(B_i,\nu_i) = \sup_i h_\mu(B_i,\nu_i)$ 
    \item Suppose that there is a $(B,\nu)$ so that $h_\mu(B,\nu) = \lim_i h_\mu(B_i,\nu_i) = \sup_i h_\mu(B_i,\nu_i)$ and $B$ factors over all the $B_i$ ($\tau_i:B \ra B_i)$, then 
    $(B,\nu)=\sup (B_i,\nu_i)$
\end{enumerate}
\end{proposition}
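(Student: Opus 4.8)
The plan is to identify every boundary with a $\Gamma$-invariant complete sub-$\sigma$-algebra of the Poisson $\sigma$-algebra $\calA_P$, writing $\calA_i$ for the algebra attached to $(B_i,\nu_i)$. Since the sequence is increasing, $\calA_i\subseteq\calA_j$ for $i<j$, and the supremum $(B^+,\nu^+)=\sup_i(B_i,\nu_i)$ is realized by the join $\calA^+=\bigvee_i\calA_i=\sigma\!\left(\bigcup_i\calA_i\right)$, which is again $\Gamma$-invariant and hence a boundary. Because Furstenberg entropy is order preserving, $h_\mu(B_i,\nu_i)$ is non-decreasing, so $\lim_i h_\mu(B_i,\nu_i)=\sup_i h_\mu(B_i,\nu_i)$, and since $(B^+,\nu^+)\geq(B_i,\nu_i)$ for every $i$ we get the easy inequality $h_\mu(B^+,\nu^+)\geq\sup_i h_\mu(B_i,\nu_i)$ at once. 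The entire content of item (1) is therefore the reverse inequality $h_\mu(B^+,\nu^+)\leq\lim_i h_\mu(B_i,\nu_i)$.

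For this I would realize the densities as a martingale. Fix $g$ and set $\phi_g^+=\frac{\dd g^{-1}\nu^+}{\dd\nu^+}$ on $B^+$ and $\phi_g^i=\frac{\dd g^{-1}\nu_i}{\dd\nu_i}$ on $B_i$. Integrating the fibre decomposition of Lemma \ref{fiberderivative} against the conditional measures $\nu^+_y$ (and using that $g^{-1}\nu^+_{gy}$ is again a probability measure) yields the identity $\phi_g^i\circ\pi_i=\bbE\!\left(\phi_g^+\mid\calA_i\right)$, where $\pi_i:B^+\to B_i$ is the factor map. Thus $h_\mu(B_i,\nu_i)=\int_\Gamma\int_{B^+}-\log\bbE(\phi_g^+\mid\calA_i)\,\dd\nu^+\,\dd\mu(g)$ while $h_\mu(B^+,\nu^+)=\int_\Gamma\int_{B^+}-\log\phi_g^+\,\dd\nu^+\,\dd\mu(g)$, and L\'evy's upward martingale convergence theorem along $\calA_i\uparrow\calA^+$ gives $\bbE(\phi_g^+\mid\calA_i)\to\phi_g^+$ both $\nu^+$-a.e. and in $L^1(\nu^+)$.

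The crux, and the step I expect to be the main obstacle, is justifying that the a.e. convergence of the integrands survives integration. Fix $g$ and put $a_i(g)=\int_{B^+}-\log\bbE(\phi_g^+\mid\calA_i)\,\dd\nu^+$. Applying Jensen's inequality to the convex function $-\log$ (together with $\int\phi_g^+\,\dd\nu^+=1$) shows that $a_i(g)\geq0$, that $a_i(g)$ is non-decreasing in $i$ (the sequence $-\log\bbE(\phi_g^+\mid\calA_i)$ is a submartingale), and that $a_i(g)\leq a_\infty(g):=\int_{B^+}-\log\phi_g^+\,\dd\nu^+$. For the matching lower bound I would split $-\log$ into positive and negative parts: Fatou handles the positive part, while for the negative part the elementary bound $\log^+ t\leq t$ gives $\log^+\bbE(\phi_g^+\mid\calA_i)\leq\bbE(\phi_g^+\mid\calA_i)$, whose right-hand side is an $L^1$-convergent (uniformly integrable) martingale, so Pratt's generalized dominated convergence lemma forces the negative parts to converge in $L^1$. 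Combining these gives $a_i(g)\uparrow a_\infty(g)$ for a.e. $g$. Finally, since $a_i(g)\geq0$ is non-decreasing in $i$, the monotone convergence theorem over $(\Gamma,\mu)$ upgrades this to $h_\mu(B_i,\nu_i)=\int_\Gamma a_i\,\dd\mu\uparrow\int_\Gamma a_\infty\,\dd\mu=h_\mu(B^+,\nu^+)$, which is the reverse inequality. One must also check $h_\mu(B^+,\nu^+)<\infty$, which holds since it is dominated by the finite Poisson entropy $h_{RW}(\mu)$, so that $a_\infty(g)<\infty$ for a.e. $g$.

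Item (2) is then short. If $(B,\nu)$ satisfies $(B,\nu)\geq(B_i,\nu_i)$ for all $i$, then $(B,\nu)$ is an upper bound for the family, so by the least upper bound property of the supremum there is a factor map $(B,\nu)\to(B^+,\nu^+)$. By item (1) and the hypothesis, $h_\mu(B^+,\nu^+)=\sup_i h_\mu(B_i,\nu_i)=h_\mu(B,\nu)$. Since Furstenberg entropy is \emph{strictly} decreasing under non-trivial factor maps between boundaries, the factor map $(B,\nu)\to(B^+,\nu^+)$ must be an isomorphism, whence $(B,\nu)=\sup_i(B_i,\nu_i)$.
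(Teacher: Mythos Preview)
Your argument is correct and follows essentially the same strategy as the paper: identify the $(B_i,\nu_i)$ with an increasing filtration $\calA_i$ on $(B^+,\nu^+)$, use Lemma~\ref{fiberderivative} to express the lower Radon--Nikodym derivatives in terms of the top one, run a martingale convergence argument for each fixed $g$, and then integrate over $\Gamma$; Part~(2) is identical in both. The only implementation difference is that the paper works directly with the \emph{sub}martingale $C_i(g,\cdot)=-\log\bbE(\phi_g^+\mid\calA_i)$ and invokes Doob's submartingale convergence to get a.e.\ and $L^1$ convergence, whereas you apply L\'evy's upward theorem to the martingale $\bbE(\phi_g^+\mid\calA_i)$ and then justify passage through $-\log$ by splitting into positive and negative parts and using Fatou together with Pratt's lemma (via the domination $\log^+ t\le t$). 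Your route is slightly more explicit about integrability: the paper's proof appeals at one point to ``the finite support of $\mu$'' to bound $|C_i(g,b)|$, an assumption not present in the statement of the proposition, whereas your argument avoids this by controlling the negative part of $-\log$ through the $L^1$-convergent martingale itself.
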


\begin{proof}
\underline{\it Of part 1} : For convenience we will adopt the notation $\sup_i (B_i,\nu_i) = (B^+,\nu^+)$. Consider the functions on $\Gamma \times B^+$ defined by
\[ 
C_i(g,b) = -\log \left(\frac{\dd g \nu_i}{\dd \nu_i}(\tau_i(b)) \right) 
\]
The finite support of $\mu$ implies that $|C_i(g,b)| \leq \inf\{\frac{1}{\mu^n(g)}~|~\mu^n(g) >0\}$ is a uniform bound on $C(g,b)$ in the $b$ coordinate for every $i$,  If $(\rho_{ij})_{x} $ the conditional measure of $\nu_j$ over $\nu_i$ at $x \in B_i$, then by lemma \ref{fiberderivative} we have that 
\[
C_j(g,b) = C_i(g,b) - \log \left(\frac{\dd g (\rho_{ij})_{g^{-1}\tau_i(b)} }{\dd (\rho_{ij})_{\tau_i(b)}}(\tau_j(b)) \right).
\]
Denote the $\sigma$-algebra on $B^+$ by $\calA^+$. If $\calA_i$ the complete sub-$\sigma$-algebra of $\calA^+$ associated to $B_i$ then the set $\{\calA_i ~|~ i\in \bbN\}$ is a increasing filtration, and by Jensen's inequality
\begin{align*}
    \bbE(C_j|\calA_j) &= C_i + \bbE(- \log \left(\frac{\dd g (\rho_{ij})_{g^{-1}\tau_i(b)} }{\dd (\rho_{ij})_{\tau_i(b)}}(\tau_j(b)) \right)|\calA_j)\\
    &\geq C_i  -\log \left( \bbE( \left(\frac{\dd g (\rho_{ij})_{g^{-1}\tau_i(b)} }{\dd (\rho_{ij})_{\tau_i(b)}}(\tau_j(b)) \right)|\calA_j) \right) \\
    &= C_i -\log(1) = C_i
\end{align*}

Thus as functions of $b \in B^+$, we may think of $C_i(g,b)$ a submartingale with uniformly bounded integrals $\int_{B^+}C_i($, which by the martingale convergence theorem (\cite{Doob}), must converge $\nu$-a.e. and in $L^1(B^+,\nu^+)$. Let us call the limit $C(g,b)$, and let $\psi(g,b) = \exp{-C(g,b)}$. We now prove that $\psi(g,b) = \frac{\dd g \nu^+}{\dd \nu^+}(b)$, which by the $L^1$ convergence of $C_i$, will prove the part 1. Notice that $\frac{\dd g \nu_i}{\dd \nu_i}(\tau_i(b))$ converges pointwise a.e. to $\psi(g,b)$. If $f$ is a bounded measurable function on $B^+$, then let $f_i = \bbE(f | \calA_i)$. Since $f_i$ is $\calA_i$ measurable there is a function $\bar{f_i}:B_i\ra \bbC$ so that $f_i = \bar{f_i}\circ \tau_i$. By dominated convergence with dominating function $\|f\|_\infty \psi $

\begin{align*}
    \int_{B^+} f(b) \psi(g,b) \dd \nu^+(b) &= \int_{B^+} \lim_i \left( f_i(b) \frac{\dd g \nu_i}{\dd \nu_i}(\tau_i(b)) \right)\dd \nu^+(b) \\
        &= \lim_i \int_{B_i} \bar{f_i}(b) \frac{\dd g \nu_i}{\dd \nu_i}(\tau_i(b)) \dd \nu_i(b) \\
        &= \lim_i \int_{B^+} f_i(b)  \dd g\nu_i(b) \\
        &= \int_{B^+} \lim_i f_i(b)  \dd g\nu^+(b) \\
        &= \int_{B^+} f(b) \frac{\dd g \nu^+}{\dd \nu^+}(b)  \dd \nu^+(b) \\
\end{align*}
Thus  $h_\mu(\sup_i (B_i,\nu_i)) = \lim_i h_\mu(B_i,\nu_i) = \sup_i h_\mu(B_i,\nu_i)$.

\underline{\it Of part 2} : By the definition of the supremum of a chain of boundaries given above $(B,\nu) \ra \sup (B_i,\nu_i)$. By part 1, if $h_\mu(B,\nu) = \sup h_\mu(B_i,\nu_i)$, then $h_\mu(B,\nu) = h_\mu(B^+,\nu^+)$. This implies $(B,\nu) = (B^+,\nu^+)$.
\end{proof}

For decreasing chains of boundaries, we will make certain additional assumptions about the distribution of the measure $\mu$,

\begin{proposition} \label{decreasing limits}
Let $\Gamma$ be a countable group, and $\mu$ be a finitely supported generating probability on $\Gamma$. Take $\{(B_i,\nu_i)~|~ i\in \bbN \}$ be a decreasing sequence of $(\Gamma,\mu)$-boundaries: i.e. if $i<j$ then there is a factor map $\pi_{ij}:(B_i,\nu_i) \ra (B_j ,\nu_j)$. Let $\inf_i (B_i,\nu_i)$ be the smallest boundary so that $(B_i,\nu_i) \ra (B,\nu)$ implies $\inf_i (B_i,\nu_i) \ra (B,\nu) $. Then
\begin{enumerate}
    \item $h_\mu(\inf_i (B_i,\nu_i)) = \lim_i h_\mu(B_i,\nu_i) = \inf_i h_\mu(B_i,\nu_i)$ 
    \item Suppose that there is a $(B,\nu)$ so that $h_\mu(B,\nu) = \inf_i h_\mu(B_i,\nu_i)$ and all of the $B_i$ factor over $B$ ($\tau_i:B_i \ra B)$, then 
    $(B,\nu)=\inf (B_i,\nu_i)$
\end{enumerate}
\end{proposition}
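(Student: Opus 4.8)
The plan is to mirror the proof of Proposition \ref{increasinglimits}, replacing the forward submartingale and Doob's theorem by a \emph{reverse} (backward) submartingale and the backward martingale convergence theorem. First I would realize every $(B_i,\nu_i)$ as a $\Gamma$-invariant complete sub-$\sigma$-algebra $\calA_i$ of the boundary $\sigma$-algebra $\calA_1$ on the largest member $B_1$ (all $B_i$ are factors of $B_1$ since the chain is decreasing). The factor maps $\pi_{1i}\colon B_1 \ra B_i$ then give a \emph{decreasing} filtration $\calA_1 \supseteq \calA_2 \supseteq \cdots$, whose intersection $\calA_\infty = \bigcap_i \calA_i$ is again $\Gamma$-invariant and complete, hence is the boundary $\sigma$-algebra of a factor of $B_1$; I record at the outset that this factor is exactly $(B^-,\nu^-) = \inf_i(B_i,\nu_i)$, since the intersection is the meet in the lattice of sub-$\sigma$-algebras, which agrees with the order-theoretic infimum of the boundaries.

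Next, as before, set $C_i(g,b) = -\log\bigl(\tfrac{\dd g\nu_i}{\dd\nu_i}(\pi_{1i}(b))\bigr)$ on $\Gamma\times B_1$; each $C_i(g,\cdot)$ is $\calA_i$-measurable, and the finite support of $\mu$ together with Lemma \ref{finitelysupported} gives a uniform bound $|C_i(g,b)| \leq \log M(g)$, independent of $i$ and $b$. For $i<j$ the chain provides a factor $\pi_{ij}\colon(B_i,\nu_i)\ra(B_j,\nu_j)$; writing $(\sigma_{ij})_y$ for the conditional measures of $\nu_i$ over $\nu_j$ and applying Lemma \ref{fiberderivative} yields the cocycle identity
\[
C_i(g,b) = C_j(g,b) - \log\left(\frac{\dd g(\sigma_{ij})_{g^{-1}y}}{\dd(\sigma_{ij})_y}(\pi_{1i}(b))\right), \qquad y = \pi_{1j}(b).
\]
Conditioning on the coarser algebra $\calA_j$ and applying Jensen's inequality to the convex function $-\log$, using that the fiberwise average $\int \tfrac{\dd g(\sigma_{ij})_{g^{-1}y}}{\dd(\sigma_{ij})_y}\,\dd(\sigma_{ij})_y = 1$, gives
\[
\bbE(C_i \mid \calA_j) \geq C_j \qquad (i<j),
\]
so $(C_i(g,\cdot),\calA_i)$ is a reverse submartingale for the decreasing filtration. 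Taking full expectations against $\nu_1$ shows $h_\mu(B_i,\nu_i)$ is non-increasing in $i$, whence $\lim_i h_\mu(B_i,\nu_i) = \inf_i h_\mu(B_i,\nu_i)$.

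By the backward martingale convergence theorem, uniform boundedness forces $C_i(g,\cdot)$ to converge, $\nu_1$-a.e.\ and in $L^1(B_1,\nu_1)$, to an $\calA_\infty$-measurable limit $C(g,\cdot)$. To finish part (1) I identify $\exp(-C(g,b)) = \tfrac{\dd g\nu^-}{\dd\nu^-}(\tau(b))$, where $\tau\colon B_1\ra B^-$ is the factor map, by the same test-function computation as in Proposition \ref{increasinglimits}: for bounded measurable $f$ on $B^-$, set $f_i = \bbE(f\circ\tau \mid \calA_i)$, note $f_i \ra f\circ\tau$ by reverse convergence (as $f\circ\tau$ is $\calA_\infty$-measurable), and pass to the limit under the dominating function $\|f\|_\infty\exp(-C)$; Lemma \ref{fiberderivative} applied to $\tau$ then shows the resulting $\calA_\infty$-measurable density is $\tfrac{\dd g\nu^-}{\dd\nu^-}\circ\tau$. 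Integrating against $\mu$ and using the $L^1$ convergence yields $h_\mu(B^-,\nu^-) = \lim_i h_\mu(B_i,\nu_i) = \inf_i h_\mu(B_i,\nu_i)$. For part (2), if $(B,\nu)$ is a common factor $\tau_i\colon B_i\ra B$ with $h_\mu(B,\nu) = \inf_i h_\mu(B_i,\nu_i)$, the universal property of the infimum gives a factor $B^-\ra B$, part (1) gives equal entropies, and since Furstenberg entropy strictly decreases across nontrivial factors of boundaries this factor is an isomorphism, i.e.\ $(B,\nu)=\inf_i(B_i,\nu_i)$.

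The main obstacle I anticipate is the convergence step: unlike the forward case, a reverse submartingale need not converge in $L^1$ absent an integrability hypothesis (one needs $\inf_i \bbE(C_i) > -\infty$, i.e.\ uniform integrability), which is precisely why finite support of $\mu$ is assumed here — it supplies the uniform bound $\log M(g)$ making the family uniformly bounded, hence uniformly integrable. A secondary point needing care is confirming that $\bigcap_i \calA_i$ genuinely represents the order-theoretic infimum $\inf_i(B_i,\nu_i)$ as a \emph{boundary} and that the limit $C$ is honestly $\calA_\infty$-measurable, so that $\exp(-C)$ descends to $B^-$.
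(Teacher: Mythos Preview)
Your proof is correct but takes a slightly more circuitous path than the paper's. You work with $C_i = -\log\bigl(\tfrac{\dd g\nu_i}{\dd\nu_i}\circ\pi_i\bigr)$ and use Jensen to get a reverse \emph{sub}martingale inequality $\bbE(C_i\mid\calA_j)\geq C_j$, after which you must separately identify the limit via a test-function computation. The paper instead observes that the Radon--Nikodym densities themselves form a genuine reverse \emph{martingale}: a two-line calculation (no Jensen) gives
\[
\bbE\!\left(\frac{\dd g\nu_i}{\dd\nu_i}\circ\pi_i \,\Big|\, \calA_j\right) \;=\; \frac{\dd g\nu_j}{\dd\nu_j}\circ\pi_j \qquad (i<j),
\]
so Doob's backward convergence theorem identifies the limit outright as $\bbE\bigl(\tfrac{\dd g\nu_1}{\dd\nu_1}\mid\calA_\infty\bigr)=\tfrac{\dd g\nu^-}{\dd\nu^-}\circ\tau$. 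One then just takes $-\log$ and applies dominated convergence via Lemma~\ref{finitelysupported}. Your route has the merit of paralleling the increasing case verbatim; the paper's exploits the asymmetry that in the decreasing direction the densities are already adapted to the filtration, which buys a one-step identification of the limit and makes the test-function argument unnecessary. A minor point: in your identification step the functions $f_i=\bbE(f\circ\tau\mid\calA_i)$ are in fact all equal to $f\circ\tau$ (since $\calA_\infty\subseteq\calA_i$), so no ``reverse convergence'' is actually needed there.
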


\begin{proof}
\underline{\it Part 1} : For notational convenience denote $\inf (B_i,\nu_i)$ by $(B^-,\nu^-)$. Let $\calA_i $ be the $\sigma$-algebra on $B_1$ which is the pullback of the $\sigma$-algebra on $B_i$ via the map $\pi_{1i} = \pi_{i}:B_1 \ra B_i$. Let $i<j$, if $\psi$ is an $\calA_j$ measurable function on $B_1$, then we can compute
\begin{align*}
    \int_{B_1}\psi \bbE\left(\frac{\dd g \nu_i}{\dd \nu_i} \circ \pi_i ~|~ \calA_j \right) \dd \nu_1 &= \int_{B_1}\psi \frac{\dd g \nu_i}{\dd \nu_i} \circ \pi_i  \dd \nu_1 \\
    &= \int_{B_1}\psi \frac{\dd g \nu_j}{\dd \nu_j} \circ \pi_j \dd \nu_1
\end{align*}
Thus for $i<j$, then $\bbE(\frac{\dd g \nu_i}{\dd \nu_i} \circ \pi_{i} ~|~\calA_j) = \frac{\dd g\nu_j}{\dd \nu_j} \circ \pi_j$. Now according to Doob (\cite{Doob} theorem 4.2) $\frac{\dd g\nu_i}{\dd \nu_i}(\pi_i(b))$ converges pointwise to $\bbE( \frac{g\nu_1}{\nu_1}~|~\bigcap_{i=1}^\infty\calA_i)$, and  $\bigcap_{i=1}^\infty\calA_i$ is the complete $\sigma$-subalgebra corresponding to $(B^-,nu^-)$. But this implies the pointwise convergence of $-\log \left( \frac{\dd g \nu_i}{\dd \nu_i} \right) \ra -\log \left( \frac{\dd g \nu^-}{\dd \nu^-} \right)$. By \ref{finitelysupported} and dominated convergence, we have that $h_\mu(B^-,\nu^-) = \lim_i h_\mu(B_i,\nu_i)$.

\underline{\it Part 2} : This follows readily from the equality of entropy estabished in part 1, and the definition of the infimum boundaries.
\end{proof}

\begin{proof}{\it (of theorem \ref{order and entropy thm})}
Given a collection of boundaries $\{(B_\sigma,\nu_\sigma)~|~\sigma \in \Sigma \}$ closed under finite joins and meets, let $H = \sup_\sigma h_\mu(B_\sigma, \nu_\sigma)$, and denote $(B^+,\nu^+) = \sup_\sigma (B_\sigma,\nu_\sigma)$. There must be a countable subset of indices $\sigma_i$ so that $\sup_{i\in \bbN} h_\mu(B_{\sigma_i},\nu_{\sigma_i}) = \lim_{i\ra \infty}h_\mu(B_{\sigma_i},\nu_{\sigma_i}) = H$. For simplicity call $B_{\sigma_i} = B_i$, consider the increasing chain of boundaries 
\[
(B'_i,\nu'_i) = \bigvee_{k = 1}^i (B_i,\nu_i) 
\]
The supremum of the entropies $h_\mu(B'_i,\nu'_i)$ along this chain is at least $H$, but on the other hand $(B'_i,\nu'_i)$ is part of the collection, as the collection is closed under joins. So the supremum of $h_\mu(B'_i,\nu'_i)$ is exactly $H$. Now applying the proposition \ref{increasinglimits} to this increasing chain, one prove the first half of the theorem.

The second half is indentical except that one uses meets instead of joins, and cites proposition \ref{decreasing limits}.
\end{proof}

\section{Proof of Theorems \ref{entropy is continuous thm}}

\begin{proof}{\it(of theorem \ref{entropy is continuous thm})} 
Let $B_i$ be a sequence of boundaries, and let $\calA_i$ be the corresponding complete $\sigma$-algebras of the Poisson boundary. Similarly, let $B$ be a boundary and $\calA$ be the corresponding $\sigma$-algebra. We wish to show that $L^2$ strong convergence implies the convergence of entropy. That is, if for all $f \in L^2(\partial_P\Gamma,\nu_P)$, $\|\bbE(f|\calA_i) -\bbE(f|\calA)\| \ra 0$ then $h_\mu(B_i, \nu_i) \ra h_\mu(B,\nu).$ 

Notice by Jensen's inequality, $\|\cdot\|_1 \le \|\cdot\|_2$, which implies that  $L^2(\partial_P\Gamma,\nu_P)$ is densely embedded in $L^1(\partial_P\Gamma,\nu_P)$. 
We now will show that $\calA_i$ converges $L^1$-strongly to $\calA.$ Take $h$ in $L^1(\partial_P\Gamma,\nu_P)$. Take $f \in L^2(\partial_P\Gamma,\nu_P)$ so that $\|h-f\|_1 < \epsilon.$ Now using the fact that conditional expectations have operator norm 1, 
\begin{align*}
    \|\bbE(h|\calA) - \bbE(h|\calA_i)\|_1 &\le \|\bbE(h - f |\calA_i)\|_1 + \| \bbE(f|\calA_i) - \bbE(f|\calA)\|_1 \\
    &\hspace{1cm} + \|\bbE(h-f|\calA)\|_1 \\
    &\le 2\|h-f\|_1 + \| \bbE(f|\calA_i) - \bbE(f|\calA)\|_1 \\
    &\le 2\|h-f\|_1 + \| \bbE(f|\calA_i) - \bbE(f|\calA)\|_2 \\
    &< 3\epsilon
\end{align*}

\noindent for sufficiently large $i$. Thus, convergence in the $L^2$ strong operator topology implies convergence in the $L^1$ strong operator topology for conditional expectation operators. 
Now, by \cite{BHO} $L^1$ strong convergence implies entropy convergence. Hence, Furstenberg entropy is continuous in the $L^2$ strong operator topology.

Now, \cite{BT} states that the space of $\sigma$-subalgbras of a Lebesgue measure space is compact in the $L^2$ strong topology.

\end{proof}

\begin{proof}{\it(of corollary \ref{spectrum closed} using theorem \ref{entropy is continuous thm})}
Since the Furstenberg entropy is continuous on $\mathcal{BL}(\Gamma,\mu)$, which is compact by the above, then its range is compact and thus closed. 
\end{proof}

It is worth noting that in \cite{BT} the authors prove that joins and meets are continuous in the $L^2$ strong topology. This creates the possibility for the following to raise or lower a ranges of entropies previously constructed.

% \begin{proof}{\it(of theorem \ref{ultralimit containment})}
% If a sequence of boundaries ($B_i, \nu_i$) has an ultralimit $(X, \lambda)$, then consider the factor maps from the Poisson boundary $\pi_i: (\partial_P \Gamma, \nu_P) \ra (B_i, \nu_i)$. By Lemma 4.16 of \cite{SaSh} parts 1 and 2, there is a map $\pi_\infty: (\partial_P \Gamma, \nu_P) \ra (X,\lambda)$. This implies that $(X,\lambda)$ is in fact a boundary. Thus, $\mathcal{BL}(\Gamma,\mu)$ is closed under ultralimits. 
% \end{proof}

% \begin{proof}{\it(of corollary \ref{spectrum closed} using theorem \ref{ultralimit containment})}
% In this case, we must assume that Let $(B_i,\nu_i)$ be a sequence of boundaries in $\mathcal{H}_\text{bound}(\Gamma,\mu)$, and suppose that $H = \lim_{i \ra \infty} h_\mu(B_i,\nu_i)$. Consider the ultralimit $(B_\infty, \nu_\infty)$ of the boundaries $(B_i, \nu_i)$, by Corollary 4.19 of \cite{SaSh} the entropy of $(B_\infty, \nu_\infty)$ is the ultralimit of the entropies of $(B_i, \nu_i)$. In the case that a limit exists, the ultralimit is equal to the limit; $\omega\text{-}\lim_{i \ra \infty} h_\mu(B_i,\nu_i) = H$, so the ultalimit of entropies is also $H$. Therefore, $h_\mu(B_\infty, \nu_\infty) = H$, and $\mathcal{H}_\text{bound}(\Gamma,\mu)$ is closed.
% \end{proof}

\section{Proof of Theorem \ref{middle H thm}}
This section concerns the range of boundary entropies for a simple random walk $\mu_s$ on a free group. A reasonable guess as to the structure of $\calH_\text{bound}(F_d,\mu_s)$ is that it should be the full interval $[0,h_{RW}(\mu)].$
It is well known that the asymptotic entropy of a finitely supported random walk is bounded above by the exponential rate of growth of the support of the walk at time $n$. The following lemma is a relative version of this fact.  Recall that the {\it  growth} of a group $\Gamma$ with respect to a word norm $\|\cdot\|$ is $v(\Gamma) = \lim_{n \ra \infty} \frac{1}{n} \log|B(n)|$, where $B(n)$ is the  ball of radius $n$ according to $\|\cdot\|$ centered at the identity. Given a quotient $\Gamma'$ of $\Gamma$, with kernel $N\triangleleft\Gamma$, then the {\it critical exponent of $N$} inside $\Gamma$ is $\delta(N) = \lim_{n \ra \infty} \frac{1}{n} \log|B(n)\cap N|$

\begin{lemma}\label{cogrowth bounds entropy}
Let $\Gamma' = \Gamma/N$ be a quotient of a group $\Gamma$, $\mu$ be a finitely supported, symmetric, generating probability measure on $\Gamma$, and $\mu'$ be the projection of $\mu$ to $\Gamma'$. Then the difference in the Furstenberg entropies of the Poisson boundaries of $(\Gamma,\mu)$ and $(\Gamma',\nu')$ can be estimated by 
\[
h_\mu(\partial_P (\Gamma,\mu),\nu_P) - h_{\mu'}(\partial_P (\Gamma',\mu'),\nu_P') \leq \delta(N)
\]
\end{lemma}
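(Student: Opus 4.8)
The plan is to reduce everything to Avez random walk entropies and then read off the difference as an averaged fiber entropy for the quotient map $\pi\colon\Gamma\to\Gamma'$. By the theorem of Kaimanovich--Vershik \cite{KV} quoted in the introduction, $h_\mu(\partial_P(\Gamma,\mu),\nu_P)=h_{RW}(\mu)$ and likewise $h_{\mu'}(\partial_P(\Gamma',\mu'),\nu_P')=h_{RW}(\mu')$, so it is enough to prove
\[
h_{RW}(\mu)-h_{RW}(\mu')=\lim_{n\to\infty}\frac1n\bigl(H(\mu^n)-H(\mu'^n)\bigr)\le\delta(N).
\]
Since $\mu'=\pi_*\mu$ we have $\mu'^n=\pi_*\mu^n$, and the chain rule for Shannon entropy applied to the pair (endpoint $Z_n$ of the $\mu$-walk, its image $\pi(Z_n)$) gives
\[
H(\mu^n)-H(\mu'^n)=H(\mu^n\mid\mu'^n)=\sum_{g'\in\Gamma'}\mu'^n(g')\,H\bigl(\mu^n_{g'}\bigr),
\]
where $\mu^n_{g'}$ denotes the conditional law of $\mu^n$ on the coset $\pi^{-1}(g')=gN$. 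Thus the entire content of the lemma is an estimate on this averaged fiber entropy.

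First I would bound each fiber entropy by the log-cardinality of the set of points the walk can reach inside that fiber. As $\mu$ is finitely supported there is a constant $R$ with $\supp(\mu^n)\subseteq B(Rn)$, so for every coset
\[
H\bigl(\mu^n_{g'}\bigr)\le\log\bigl|gN\cap\supp(\mu^n)\bigr|\le\log\bigl|gN\cap B(Rn)\bigr|.
\]
Translating a coset to the origin by its shortest representative identifies $gN\cap B(Rn)$ with a subset of $N$ contained in a word-ball, so these cardinalities are controlled by the relative growth function $n\mapsto|N\cap B(n)|$ whose exponential rate is, by definition, the critical exponent $\delta(N)$. Substituting a uniform-in-$g'$ bound of this type back into the average and dividing by $n$ produces $h_{RW}(\mu)-h_{RW}(\mu')\le\delta(N)$ in the limit.

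The step that requires care --- and which I expect to be the main obstacle --- is precisely the passage from ``$|gN\cap B(Rn)|$'' to ``$\delta(N)$'' uniformly over the fibers. Translating by the shortest representative $g_0$ of $gN$ only gives $gN\cap B(Rn)\subseteq g_0\bigl(N\cap B(Rn+\|g'\|_{\Gamma'})\bigr)$, and since the $\mu'$-walk typically has positive drift, $\|g'\|_{\Gamma'}$ is of order $n$; a naive worst-case count therefore inflates the exponential rate beyond $\delta(N)$. To recover the sharp constant one must argue either that, to exponential order, the identity coset $N$ carries the most walk-reachable points, i.e. $\limsup_n\frac1n\log\max_{g'}|gN\cap B(Rn)|\le\delta(N)$, or that $\mu^n$ concentrates (via the Shannon--McMillan--Breiman behaviour of $-\tfrac1n\log\mu^n(Z_n)$) so that only the typical fibers contribute and on those the conditional laws $\mu^n_{g'}$ are far from uniform, making their entropy strictly smaller than the logarithm of their support. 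Here the symmetry of $\mu$ and Lemma \ref{finitelysupported} are available to control the Radon--Nikodym factors. Once this coset-counting estimate is in place the remaining steps are formal, and letting $n\to\infty$ closes the argument.
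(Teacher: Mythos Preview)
Your plan coincides with the paper's proof almost line for line: reduce via Kaimanovich--Vershik to $h_{RW}(\mu)-h_{RW}(\mu')$, expand $H(\mu^n)-H(\mu'^n)$ as a $\mu'^n$-weighted sum of fiber entropies over the cosets of $N$ (the paper writes this out explicitly and applies Jensen at exactly the point you do), and then bound each fiber term by $\log|gN\cap\supp(\mu^n)|$.

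The only divergence is in how the final coset-counting step is handled. The paper does \emph{not} invoke Shannon--McMillan--Breiman behaviour, concentration of the walk, or Radon--Nikodym control from Lemma~\ref{finitelysupported}. It disposes of the uniformity issue in one sentence at the outset: choose the minimal-length representative $g$ of the coset, then $g^{-1}(gN\cap B(n))\subset N\cap B(n)$, hence $|gN\cap\supp(\mu^n)|\le|N\cap B(n)|$ for every coset simultaneously, and taking $\tfrac1n\log$ yields $\delta(N)$. So the ``main obstacle'' you anticipate is treated as a purely combinatorial fact about cosets and balls, with no probabilistic input.

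Your instinct that this step deserves scrutiny is sound. From the triangle inequality alone one only gets $|g^{-1}x|\le|g|+|x|\le 2n$ when $|g|$ is minimal in the coset and $|x|\le n$, so the naive translation lands in $N\cap B(2n)$ rather than $N\cap B(n)$; the paper asserts the sharper inclusion without further comment. In any case, the paper's route is the direct set-theoretic one you outline first, not either of the more elaborate probabilistic alternatives you sketch, so you should drop those and argue the coset bound head-on.
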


\begin{proof}
First note that for any coset $gN \in \Gamma'$ we may take a representative $g \Gamma$ of $gN$ so that $|g|_S$ is minimized, and if we do so, then $g^{-1}(gN \cap B(n) \subset N \cap B(n)$. Thus $|gN \cap B(n)| \leq |N \cap B(n)|$. 

We know from Kaimanovich-Vershik \cite{KV} the entropy of the Poisson boundary is equal to the Avez asymptotic entropy of the random walk on $\Gamma$. I.e.
\[
h_\mu(\partial_P(\Gamma,\mu),\nu_P) = h_{RW}(\mu) = \lim_{n \ra \infty} \frac{H(\mu^n)}{n}
\]
where  $H$ is the Shannon entropy $H(\mu^n) = \sum_{g \in \Gamma} -\log(\mu^n(g))\mu^n(g)$. Now we calculate via Jensen's inequality:
\begin{align*}
    h_\mu(\partial_P (\Gamma,\mu),\nu_P) &- h_{\mu'}(\partial_P (\Gamma',\mu'),\nu_P')  = h_{RW}(\mu)-h_{RW}(\mu')\\
    &= \lim_{n \ra \infty} \frac{1}{n} \Bigg(\sum_{g \in \Gamma} -\log(\mu^n(g))\mu^n(g) \\
    &\qquad - \sum_{gN \in \Gamma'} -\log(\mu'^n(gN))\mu'^n(gN) \Bigg)\\
    &= \lim_{n \ra \infty} \frac{1}{n} \Bigg(\sum_{gN \in \Gamma'}\sum_{x\in gN} -\log(\mu^n(x))\mu^n(x) \\
    &\qquad- \sum_{gN \in \Gamma'}\sum_{x \in gN} -\log(\mu^n(gN))\mu^n(x) \Bigg)\\
    &= \lim_{n \ra \infty} \frac{1}{n} \sum_{gN \in \Gamma'} \left( \sum_{x \in gN} \log\left(\frac{\mu^n(gN)}{{\mu}^n(x)}\right)\frac{\mu^n(x)}{\mu^n(gN)}\right)\mu^n(gN) \\
    &\leq \lim_{n \ra \infty} \frac{1}{n} \sum_{gN \in \Gamma'} \log|gN \cap \supp(\mu^n)| ~ \mu^n(gN) \\
    &\leq \lim_{n \ra \infty} \frac{1}{n} \log|N \cap B(n)| = \delta(N)
\end{align*}
\end{proof}

This estimate is limited in many groups due to the phenomena of {growth tightness}, the critical exponent of infinite normal subgroups is at least some fixed, positive proportion of $v(\Gamma)$. This is known to hold in free groups, and in fact in all non-elementary hyperbolic groups. 

\begin{proof}{\it(of theorem \ref{middle H thm})}
For the simple random walk $\mu$ on the free group $F_d$, the fundamental inequality of Guivarc'h attains equality. This means,
\[
h_{RW}(\mu) = d(\mu) \cdot v(F_d) 
\]
where $d(\mu)$ is the {\it drift} $d(\mu)= \lim_{n\ra \infty}\frac{1}{n} \sum_{g\in\Gamma} |g|\mu^n(g)$ of the random walk according to $\mu$ and the word metric $|\cdot |$. This allows to compute the random walk entropy exactly in terms of the rank:
\begin{equation}\label{H formula}
h_{RW}(\mu) = \frac{d-1}{d} \cdot \log(2d-1) 
\end{equation}
According to Jaerich-Matsuzaki \cite{JMgrowth} for all normal subgroups $N$ of $F_d$ we have $\delta(N) \geq \frac{1}{2}v(F_d)$, but there are sequences $N_i$  of normal subgroups so that $\delta(N_i) \ra \frac{1}{2}v(F_d)$ as $i\ra \infty$. Now set $\Gamma_i= \Gamma/N_i$, by \ref{cogrowth bounds entropy} for any $\epsilon>0$ there is a sufficiently large $k$ so that for $i>k$
\[
h_{RW}(\mu)- h_{\mu}(\partial_P \Gamma_i,(\nu_P)_i) \leq  \frac{1}{2}\log(2d-1)+\epsilon
\]
By \ref{H formula}, we obtain
\[
\frac{d-2}{2d-2}\log(2d-1)-\epsilon \leq h_{\mu}(\partial_P \Gamma_i,(\nu_P)_i) 
\]
This establishes the theorem.
\end{proof}

In work with Alex Furman \cite{Otherpaper}, we show that there is an explicit constant $\epsilon(d)$ so that if $\mu$ is the uniform probability on a symmetric free generating set in the free group $F_d$ of rank $d$, then there are essentialy free $(F_d,\mu)$-boundaries $(B_t,\nu_t)$ for each $t \in (0,\epsilon(d))$ so that $h_\mu(B_t,\nu_t) = t$. In fact all of the boundaries $(B_t,\nu_t)$ are realized by Lebesgue class measures on the sphere $S^2$. If follows from the density this construction, and the density of smooth functions in $L^2(S^2)$ the map $t \mapsto (B_t,\nu_t)$ is $L^2$ strongly continuous. By \cite{BT} joining is a $L^2$ strongly continuous operation, so $t \mapsto h_\mu(\partial \Gamma_i \vee B_t, (\nu_P)_i \vee \nu_t)$ is continuous. It cannot be constant since the join of a essentially free action and an action with a non-trivial kernel cannot be trivial, but as $t \ra 0$, $B_t$ converges to the trivial boundary. This establishes the following strengthened version of theorem \ref{middle H thm}.

\begin{corollary}
For each $\epsilon$ there is an interval $[a, b] \subseteq \calH_\text{bound}(F_d,\mu)$ where $a>\frac{d-2}{2d-2}h_{RW}(\mu) -\epsilon$
\end{corollary}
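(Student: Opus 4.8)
The plan is to fix a single boundary coming out of the proof of Theorem \ref{middle H thm} and perturb it by joining with a small, essentially free boundary, then use the intermediate value theorem to fill in an interval of entropies just above its value. First I would apply the estimate established in the proof of Theorem \ref{middle H thm} to fix an index $i$ large enough that the quotient $\Gamma_i = F_d/N_i$ satisfies
\[
a := h_\mu(\partial_P\Gamma_i,(\nu_P)_i) > \frac{d-2}{2d-2}h_{RW}(\mu) - \epsilon.
\]
Next I would import from \cite{Otherpaper} the family of essentially free boundaries $(B_t,\nu_t)$, $t \in (0,\epsilon(d))$, realized by Lebesgue-class measures on $S^2$ with $h_\mu(B_t,\nu_t) = t$, extended to $t = 0$ by declaring $(B_0,\nu_0)$ to be the trivial boundary $(\ast,\delta_\ast)$. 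The object of interest is the function
\[
g(t) = h_\mu\bigl(\partial_P\Gamma_i \vee B_t,\, (\nu_P)_i \vee \nu_t\bigr),
\]
and the goal is to show that $\im(g)$ contains a non-degenerate interval whose left endpoint is $a$.

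The first substantive step is continuity of $g$ on $[0,\epsilon(d))$. Since smooth functions are dense in $L^2(S^2)$, the assignment $t \mapsto (B_t,\nu_t)$ is $L^2$-strongly continuous, and it extends continuously to $t=0$ because $B_t$ converges to the trivial boundary as $t \to 0$. By \cite{BT} the join operation is $L^2$-strongly continuous, so $t \mapsto \partial_P\Gamma_i \vee B_t$ is $L^2$-strongly continuous; composing with the continuity of Furstenberg entropy furnished by Theorem \ref{entropy is continuous thm} shows that $g$ is continuous. In particular $g(0) = h_\mu(\partial_P\Gamma_i \vee (\ast)) = a$.

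The key remaining point is that $g(t) > a$ for every $t > 0$, and this is where I expect the main obstacle to lie. For $t > 0$ the factor map $\partial_P\Gamma_i \vee B_t \to \partial_P\Gamma_i$ is non-trivial: any $F_d$-factor of $\partial_P\Gamma_i$ carries an action through which the kernel $N_i$ acts trivially, whereas $N_i$ is non-trivial and acts essentially freely on $B_t$, so $B_t$ cannot be a factor of $\partial_P\Gamma_i$ and the join strictly dominates $\partial_P\Gamma_i$. Since Furstenberg entropy is strictly decreasing under non-trivial factor maps between boundaries, $g(t) > a$. Combining this with $g(0) = a$ and the continuity of $g$, the intermediate value theorem applied on $[0,\delta]$ for a small $\delta > 0$ yields $[a, g(\delta)] \subseteq \im(g) \subseteq \calH_\text{bound}(F_d,\mu)$, giving the desired interval with $b = g(\delta)$. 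The delicate pieces are the continuity of the join at the degenerate endpoint $t = 0$ and the kernel comparison forcing strict monotonicity; both rest on the external inputs \cite{Otherpaper} and \cite{BT} together with Theorem \ref{entropy is continuous thm}.
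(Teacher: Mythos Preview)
Your proposal is correct and follows essentially the same approach as the paper: fix a quotient boundary $\partial_P\Gamma_i$ from Theorem~\ref{middle H thm}, join it with the continuous family of essentially free boundaries $(B_t,\nu_t)$ from \cite{Otherpaper}, invoke $L^2$-strong continuity of the join \cite{BT} together with the entropy continuity of Theorem~\ref{entropy is continuous thm}, and use the kernel mismatch (non-trivial $N_i$ versus essential freeness of $B_t$) to force $g(t)>a$ for $t>0$. Your write-up is in fact more explicit than the paper's sketch, spelling out the strict monotonicity of entropy under non-trivial boundary factors and the application of the intermediate value theorem.
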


\section{Proof of Theorem \ref{hyperbolic boundaries}}

Motivated in part by classical small cancellation theory, Gromov initiated the study of hyperbolic groups. This work was refined by Ol'shanskii \cite{O}, who produced several precise results that generalized small cancellation theory to the setting of hyperbolic groups. From this we can attain a large family of distinct hyperbolic quotients of every hyperbolic group. Under mild moment assumption on the increment of a random walk, the Poisson boundary of a random walk on any hyperbolic group can be realized via the hitting measure of that walk on the Gromov boundary of the group. The action of a hyperbolic group on it's Gromov boundary is essentially free for any non-atomic Borel measure, a fact which we will leverage to prove when a hyperbolic group acts on the Poisson boundaries of distinct hyperbolic quotients, these actions are not equivariantly isomorphic.

The next proposition says that every non-elementary hyperbolic group has a countably deep, countably branching, rooted tree of non-elemtary hyperbolic quotients. Recall that the set of finite sequences of natural numbers is denoted $\bbN^{<\infty}$. 

\begin{prop}\label{manyquotients}
Every non-elementary hyperbolic group $\Gamma$ has a countable family of non-elementary hyperbolic quotients indexed by finite sequences of natural numbers. $\Gamma$ has a family of non-elementary hyperbolic quotients $\{Q_\alpha | \alpha \in\bbN^{<\infty}\}$ so that if $\alpha < \beta$ in $\bbN^{<\infty}$ (i.e $\beta$ extends the sequence $\alpha$) then $Q_\alpha > Q_\beta$ (which means $Q_\alpha \ra Q_\beta$). 
\end{prop}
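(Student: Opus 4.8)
The plan is to construct the tree $\{Q_\alpha\}_{\alpha\in\bbN^{<\infty}}$ by recursion on the length of the index $\alpha$, feeding a single group-theoretic input into each node. That input, which I would isolate as a lemma, is that a non-elementary hyperbolic group admits \emph{countably many distinct proper quotients that are again non-elementary hyperbolic}. Granting it, the construction writes itself: put $Q_\emptyset=\Gamma$ at the root; and if $Q_\alpha$ has already been built and is non-elementary hyperbolic, let its children $\{Q_{\alpha n}\}_{n\in\bbN}$ (where $\alpha n$ is the sequence $\alpha$ with $n$ appended) enumerate the quotients produced by the lemma applied to $Q_\alpha$, each carrying its canonical quotient homomorphism $Q_\alpha\to Q_{\alpha n}$. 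Since every child is again non-elementary hyperbolic, the recursion never stalls and each node genuinely has countably many descendants.

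For the lemma I would draw on Ol'shanskii's small-cancellation theory over hyperbolic groups \cite{O}. A non-elementary hyperbolic group $G$ contains loxodromic elements of infinite order (elements fixing exactly two points of $\partial G$); fix one such $g$. Ol'shanskii's theorem supplies a threshold $N(G)$ so that for every $n\ge N(G)$ the quotient $G/\langle\langle g^{n}\rangle\rangle$ is non-elementary hyperbolic and the quotient map is injective on a ball about the identity whose radius grows with $n$. To force the resulting quotients to be genuinely distinct I would restrict $n$ to range over the primes $p_1<p_2<\cdots$ exceeding $N(G)$: for such $n=p$ the image of $g$ is nontrivial (by the injectivity on balls, once $p$ is large) and its order divides the prime $p$, hence equals $p$. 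Setting $G_n:=G/\langle\langle g^{p_n}\rangle\rangle$ therefore yields quotients in which $g$ has image of order exactly $p_n$; distinct $p_n$ give distinct orders, so the kernels $\langle\langle g^{p_n}\rangle\rangle$ are pairwise distinct and each $G_n$ is a proper quotient. This is the family the recursion consumes.

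With the lemma in place the two assertions of Proposition~\ref{manyquotients} are routine. Non-elementary hyperbolicity of every $Q_\alpha$ holds by induction on the length of $\alpha$. For the order relation, suppose $\beta$ extends $\alpha$, say $\beta=\alpha\,n_1 n_2\cdots n_k$; composing the successive quotient maps along the branch
\[
Q_\alpha \longrightarrow Q_{\alpha n_1} \longrightarrow Q_{\alpha n_1 n_2} \longrightarrow \cdots \longrightarrow Q_\beta
\]
produces the required surjection $Q_\alpha\to Q_\beta$, i.e.\ $Q_\alpha>Q_\beta$. The branching is genuine because the children of a fixed node have pairwise distinct kernels, and the kernels $\ker(\Gamma\to Q_\alpha)$ strictly increase as $\alpha$ is extended, so $Q_\alpha\to Q_\beta$ is a proper quotient whenever $\beta$ properly extends $\alpha$.

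The hard part will be the lemma, specifically the \emph{preservation of non-elementary hyperbolicity} under the quotient. Arbitrary quotients of hyperbolic groups are neither hyperbolic nor non-elementary in general, and one must rule out collapse onto a finite or virtually cyclic group; this is exactly what Ol'shanskii's machinery (imposing $g^{n}=1$ for $g$ loxodromic and $n$ large is a small-cancellation condition over $G$) is designed to guarantee, together with the injectivity-on-balls statement that I use to control the order of the image of $g$. Once these properties are cited, the remaining work---the primes trick for distinctness and the bookkeeping of the tree---is elementary.
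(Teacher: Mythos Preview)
Your proof is correct and follows the same architecture as the paper's: induct on the length of $\alpha$, and at each node invoke Ol'shanskii \cite{O} to manufacture countably many distinct non-elementary hyperbolic quotients. The paper, like you, reduces everything to a single-step ``lemma'' and then lets the recursion run.

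The two implementations differ in the flavour of Ol'shanskii used and in how they certify distinctness. You kill a high prime power $g^{p_n}$ of a fixed loxodromic and read off the order of the image of $g$; this is clean and distinguishes \emph{siblings}. The paper instead uses the version of Ol'shanskii's theorem that produces a quotient which is \emph{injective on any prescribed finite set} $A$, and it threads increasingly large ``witness sets'' $W'_\alpha\subset\Gamma$ through the recursion (a newly chosen element of the previous kernel is adjoined at every step). This extra bookkeeping buys something your argument does not: it guarantees that \emph{all} the $Q_\alpha$ are pairwise distinct as quotients of $\Gamma$, not merely siblings and ancestors along a branch. Your primes trick, applied with a fresh loxodromic at each node, does not by itself exclude $Q_\alpha\cong Q_\beta$ (as $\Gamma$-quotients) for incomparable $\alpha,\beta$.

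For the proposition as literally stated this gap is harmless. But note that the paper's proof of Theorem~\ref{hyperbolic boundaries} consumes exactly this stronger output: the boundaries $\partial Q_\alpha$ are separated by showing their essential kernels $E(Q_\alpha)N_\alpha$ are pairwise distinct, and the witness sets are what pin down the $N_\alpha$. If you want your construction to plug into that argument, you should augment the recursion to carry a finite set $S_\alpha\subset\Gamma$ that injects into $Q_\alpha$ and fails to inject into every previously built quotient---your ``injectivity on balls'' input to Ol'shanskii is exactly the tool for this, so the fix is routine.
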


\begin{proof}
First, without loss of generality, we can pass from $\Gamma$ to $\Gamma/E(\Gamma)$ and assume that $E(\Gamma) = 1$. We will proceed via induction, which will rest on the following claim.

\begin{claim}\label{claim}
If $\Gamma$ is a non-elementary hyperbolic group and $S \subset \Gamma$ is a finite subset of $\Gamma$, then
\begin{enumerate}
    \item there is a countable family of non-trivial, non-elementary hyperbolic quotients $Q_i$ for $i\in \bbN$,
    \item there is a countable family of finite ``witness" subsets $\{W_i\subset \Gamma|i\in \bbN\}$, so that
    \begin{itemize}
        \item $W_i$ injects into $\Gamma_i$,
        \item if $i\neq j$, then $W_i$ doesn't inject into $Q_j$ or $W_i$ doesn't inject into $Q_j$,
        \item for all $i$, $S$ injects into $Q_\alpha$, and is disjoint from $W_i$
    \end{itemize}
\end{enumerate}
\end{claim}

\noindent To prove this claim, We make heavy use of the following theorem of Ol'shanskii:

\begin{theorem}\label{Ol}
For $\Gamma $ a non-elementary hyperbolic group, take any finite set $A\subset \Gamma$ and suppose that $\Gamma'$ is a non-elementary subgroup of $\Gamma$ so that $E(\Gamma)$ is the largest finite subgroup of $\Gamma$ normalized by $\Gamma'$. Then $\Gamma$ has a  is a quotient $\epsilon_0:\Gamma\ra Q$ so that
\begin{itemize}
    \item $Q$ is a non-elementary hyperbolic group;
    \item $\epsilon_0 |_{\Gamma'}:\Gamma' \ra Q$ is surjective;
    \item $\epsilon_0$ is injective on $A$
\end{itemize} 
\end{theorem}

By the theorem, if we take any non-trivial, non-elementary subgroup $\Gamma' \lneq \Gamma$, and let $A= A_1 = \{1, a_1\}$, then we get a non-trivial non-elementary hyperbolic quotient $Q' = \Gamma/N_1$ so that $A_1$ injects. Now take $a_2$ to be any non identity element in $N_1$, and set $A_2 = \{1,a_1, a_2\}$. Now \ref{Ol} produces another non-trivial quotient $Q_2$ in the same way, so that $A_2$ injects, but $A_1$ does not. Proceeding by induction builds the family of quotients we desire, and proves the claim.

For the main induction, assume that we have constructed the tree of quotients to a finite depth of $n$. This means the following things exist:

\begin{enumerate}
    \item A family of quotients $Q_\alpha$ for each $\alpha in \bbN^{<n} = \cup{i=0}^{n-1} \bbN^i$, so that if $\alpha$ is an extends $\beta$, (i.e. $\alpha > \beta$) then $Q_\alpha > Q_\beta$ (i.e. $Q_\alpha\ra Q_\beta$)
    \item A collection of ``witness" subsets $\{W'_\alpha \subset \Gamma | \alpha \in \bbN^{< n} \}$, so that 
    \begin{itemize}
        \item $W'_\alpha \hookrightarrow Q_\alpha$,
        \item if $\alpha \neq \beta$ then either $W'_\alpha \not \hookrightarrow Q_\beta$ or $W'_\beta \not \hookrightarrow Q_\alpha$
    \end{itemize}
\end{enumerate} 
The purpose of the witness sets $A_\alpha$ is to ensure that the quotients $Q_\alpha$ are all distinct. 

For any natural number sequence $\alpha= (a_1,\ldots,a_{n-1})$ let $\alpha i = (a_1,\ldots,a_{n-1},i)$; we now use claim above to construct for each $i\in\bbN$ a quotient of $Q_\alpha$ denoted $Q_{\alpha i}$, and then repeat for of all of the possible indices $\alpha \in \bbN^{n-1}$. We can also ensure that for each $i$, $W_\alpha \hookrightarrow Q_{\alpha i}$ and that there is a set of witnesses $W_{\alpha i}$ so that for $i \neq j $ either $W_{\alpha i} \not \hookrightarrow Q_{\alpha j}$ or $W_{\alpha j} \not \hookrightarrow Q_{\alpha i}$. Now take as witnesses $W'_{\alpha i} = W_{\alpha i} \cup W_{\alpha }$, so the proposition is proved.

\end{proof}

\begin{proof}{\it(of theorem \ref{hyperbolic boundaries})}
First, pass to a non-elementary hyperbolic quotient of $\Gamma$ with property (T), as described in \cite{Gromov}, this will ensure that the boundaries we construct later will have entropies bounded fd Given the assumptions on $\mu$, we can identify the Poisson boundaries of $Q_\alpha$ with their Gromov boundaries \cite{Kaim}. That is $(\partial_PQ_\alpha,(\nu_P)_\alpha)\cong (\partial Q_\alpha,\nu_\alpha)$ where $\nu_\alpha$ is the unique regular, stationary measure with full support on $\partial Q_\alpha$. For $\nu_\alpha$-a.e. point $\xi \in \partial Q_\alpha$, the stabilizer of $\xi$ is $E(\Gamma)$. Considered as a $\Gamma$ space, the essential kernel of  $\Gamma \curvearrowright \partial Q_\alpha$ is precisely $E(Q_\alpha)N_\alpha$, where $N_\alpha$ is the kernel of $\Gamma \ra Q_\alpha$.

By the selection of the witness set all of these essential stabilizers are distinct. In a $\Gamma$ equivariant isomorphism of boundaries, the essential stabilizers would conjugated. As the essential stabilizers are normal, this is impossible. Therefore $\bbN^{<\infty}$ embeds into $\mathcal{BL}(\Gamma,\nu)$ 
\end{proof}

Each ``branch" of the tree of boundaries constructed has a non-trivial infimum boundary. It is interesting to ask whether the whether there is some way to distinguish them, in which case one could improve the statement to say that $\mathcal{BL}(\Gamma,\mu)$ and $\calH_\text{bound}(\Gamma,\mu)$ have the cardinality of the continuum.

\begin{bibdiv}
\begin{biblist}

\bib{Avez}{article}{
        author={Avez, Andr\'{e}},
        title={Entropie des groupes de type fini},
        language={French},
        journal={C. R. Acad. Sci. Paris S\'{e}r. A-B},
        volume={275},
        date={1972},
        pages={A1363--A1366},
        issn={0151-0509},
        review={\MR{324741}},
    }

\bib{BT}{article}{
    title={A Compact Topology for $\sigma$-algebra Convergence}, 
    author={Beissner, Patrick},
    author={T\"olle, Jonas M.},
    year={2018},
    eprint={arXiv:1802.05920},
}

\bib{BHO}{article}{
    title={Kudo-Continuity of Entropy Functionals}, 
    author={Bj\"orklund, Michael},
    author={Hartman, Yair},
    author={Oppelmayer, Hannah},
    year={2020},
    eprint={arXiv:2002.06647},
}

\bib{Bowditch}{article}{
   author={Bowditch, Brian H.},
   title={A topological characterisation of hyperbolic groups},
   journal={J. Amer. Math. Soc.},
   volume={11},
   date={1998},
   number={3},
   pages={643--667},
   issn={0894-0347},
   review={\MR{1602069}},
   doi={10.1090/S0894-0347-98-00264-1},
}

\bib{Bowen}{article}{
   author={Bowen, Lewis},
   title={Random walks on random coset spaces with applications to
   Furstenberg entropy},
   journal={Invent. Math.},
   volume={196},
   date={2014},
   number={2},
   pages={485--510},
   issn={0020-9910},
   review={\MR{3193754}},
   doi={10.1007/s00222-013-0473-0},
}

\bib{BHT}{article}{
   author={Bowen, Lewis},
   author={Hartman, Yair},
   author={Tamuz, Omer},
   title={Property (T) and the Furstenberg entropy of nonsingular actions},
   journal={Proc. Amer. Math. Soc.},
   volume={144},
   date={2016},
   number={1},
   pages={31--39},
   issn={0002-9939},
   review={\MR{3415574}},
   doi={10.1090/proc/12685},
}

\bib{Otherpaper}{article}{
   author={Dodds, Samuel},
   author={Furman, Alex},
   title={Quotients of Poisson Boundaries, Entropy, and Spectral Gap},
   note={In preparation},
}

\bib{Doob}{book}{
   author={Doob, J. L.},
   title={Stochastic processes},
   series={Wiley Classics Library},
   note={Reprint of the 1953 original;
   A Wiley-Interscience Publication},
   publisher={John Wiley \& Sons, Inc., New York},
   date={1990},
   pages={viii+654},
   isbn={0-471-52369-0},
   review={\MR{1038526}},
}

\bib{Furst1}{article}{
        author={Furstenberg, Harry},
        title={A Poisson formula for semi-simple Lie groups},
        journal={Ann. of Math. (2)},
        volume={77},
        date={1963},
        pages={335--386},
        issn={0003-486X},
        review={\MR{146298}},
        doi={10.2307/1970220},
    }

\bib{Furst2}{article}{
	   author={Furstenberg, Harry},
	   title={Noncommuting random products},
	   journal={Trans. Amer. Math. Soc.},
	   volume={108},
	   date={1963},
	   pages={377--428},
	   issn={0002-9947},
	   review={\MR{0163345}},
	   doi={10.2307/1993589},
	}

\bib{Furst3}{article}{
        author={Furstenberg, Harry},
        title={Boundary theory and stochastic processes on homogeneous spaces},
        conference={
      title={Harmonic analysis on homogeneous spaces},
      address={Proc. Sympos. Pure Math., Vol. XXVI, Williams Coll.,
      Williamstown, Mass.},
      date={1972},
        },
        book={
      publisher={Amer. Math. Soc., Providence, R.I.},
        },
        date={1973},
        pages={193--229},
        review={\MR{0352328}},
    }

\bib{FurstGlas}{article}{
   author={Furstenberg, Hillel},
   author={Glasner, Eli},
   title={Stationary dynamical systems},
   conference={
      title={Dynamical numbers---interplay between dynamical systems and
      number theory},
   },
   book={
      series={Contemp. Math.},
      volume={532},
      publisher={Amer. Math. Soc., Providence, RI},
   },
   date={2010},
   pages={1--28},
   review={\MR{2762131}},
   doi={10.1090/conm/532/10481},
}	

\bib{Furman}{article}{
   author={Furman, Alex},
   title={Random walks on groups and random transformations},
   conference={
      title={Handbook of dynamical systems, Vol. 1A},
   },
   book={
      publisher={North-Holland, Amsterdam},
   },
   date={2002},
   pages={931--1014},
   review={\MR{1928529}},
   doi={10.1016/S1874-575X(02)80014-5},
}

\bib{Gromov}{article}{
   author={Gromov, M.},
   title={Asymptotic invariants of infinite groups},
   conference={
      title={Geometric group theory, Vol. 2},
      address={Sussex},
      date={1991},
   },
   book={
      series={London Math. Soc. Lecture Note Ser.},
      volume={182},
      publisher={Cambridge Univ. Press, Cambridge},
   },
   date={1993},
   pages={1--295},
   review={\MR{1253544}},
}	
	
\bib{HT}{article}{
        author={Hartman, Yair},
        author={Tamuz, Omer},
        title={Furstenberg entropy realizations for virtually free groups and
        lamplighter groups},
        journal={J. Anal. Math.},
        volume={126},
        date={2015},
        pages={227--257},
        issn={0021-7670},
        review={\MR{3358032}},
        doi={10.1007/s11854-015-0016-2},
    }
    
\bib{HY}{article}{
   author={Hartman, Yair},
   author={Yadin, Ariel},
   title={Furstenberg entropy of intersectional invariant random subgroups},
   journal={Compos. Math.},
   volume={154},
   date={2018},
   number={10},
   pages={2239--2265},
   issn={0010-437X},
   review={\MR{3867301}},
   doi={10.1112/s0010437x18007261},
}

\bib{JMgrowth}{article}{
   author={Jaerisch, Johannes},
   author={Matsuzaki, Katsuhiko},
   title={Growth and cogrowth of normal subgroups of a free group},
   journal={Proc. Amer. Math. Soc.},
   volume={145},
   date={2017},
   number={10},
   pages={4141--4149},
   issn={0002-9939},
   review={\MR{3690601}},
   doi={10.1090/proc/13568},
}

\bib{Kaim}{article}{
   author={Kaimanovich, Vadim A.},
   title={The Poisson formula for groups with hyperbolic properties},
   journal={Ann. of Math. (2)},
   volume={152},
   date={2000},
   number={3},
   pages={659--692},
   issn={0003-486X},
   review={\MR{1815698}},
   doi={10.2307/2661351},
}

\bib{KV}{article}{
    author={Ka\u{\i}manovich, V. A.},
    author={Vershik, A. M.},
    title={Random walks on discrete groups: boundary and entropy},
    journal={Ann. Probab.},
    volume={11},
    date={1983},
    number={3},
    pages={457--490},
    issn={0091-1798},
    review={\MR{704539}},
}

\bib{Loeb}{article}{
   author={Loeb, Peter A.},
   title={Conversion from nonstandard to standard measure spaces and
   applications in probability theory},
   journal={Trans. Amer. Math. Soc.},
   volume={211},
   date={1975},
   pages={113--122},
   issn={0002-9947},
   review={\MR{390154}},
   doi={10.2307/1997222},
}

\bib{Margulis_factor}{article}{
    author={Margulis, G. A.},
    title={Factor groups of discrete subgroups and measure theory},
    language={Russian},
    journal={Funktsional. Anal. i Prilozhen.},
    volume={12},
    date={1978},
    number={4},
    pages={64--76},
    issn={0374-1990},
    review={\MR{515630}},
}

\bib{Nevo}{article}{
   author={Nevo, Amos},
   title={The spectral theory of amenable actions and invariants of discrete
   groups},
   journal={Geom. Dedicata},
   volume={100},
   date={2003},
   pages={187--218},
   issn={0046-5755},
   review={\MR{2011122}},
   doi={10.1023/A:1025839828396},
 }

\bib{O}{article}{
   author={Ol\cprime shanski\u{\i}, A. Yu.},
   title={On residualing homomorphisms and $G$-subgroups of hyperbolic
   groups},
   journal={Internat. J. Algebra Comput.},
   volume={3},
   date={1993},
   number={4},
   pages={365--409},
   issn={0218-1967},
   review={\MR{1250244}},
   doi={10.1142/S0218196793000251},
}

\bib{Roh}{article}{
   author={Rohlin, V. A.},
   title={On the fundamental ideas of measure theory},
   journal={Amer. Math. Soc. Translation},
   volume={1952},
   date={1952},
   number={71},
   pages={55},
   issn={0065-9290},
   review={\MR{0047744}},
}

% \bib{Sayag}{article}{
%     title={Ultralimits, Amenable Actions and Entropy}, 
%     author={Sayag, Elad},
%     year={2021},
%     eprint={arXiv:2204.11112},
% }

% \bib{SaSh}{article}{
%     title={Entropy, Ultralimits and the Poisson boundary}, 
%     author={Sayag, Elad},
%     author={Shalom, Yehuda}, 
%     year={2021},
%     eprint={arxiv.2202.06607},
% }

\bib{TZh}{article}{
    title={On the spectrum of asymptotic entropies of random walks}, 
    author={Tamuz, Omer},
    author={Zheng, Tianyi}, 
    year={2019},
    eprint={arXiv.1903.01312},
}

\end{biblist}
\end{bibdiv}

\end{document}